\newtheorem{theorem}{Theorem}
\newtheorem{conjecture}{Conjecture}
\newcommand{\R}{\mathbb{R}}
\newcommand{\G}{\mathcal{G}}
\newcommand{\algoname}[1]{\textnormal{\textsc{#1}}}
\DeclareMathOperator{\Hc}{Hc}
\DeclareMathOperator{\CPA}{CPA}
\DeclareMathOperator{\CPD}{CPD}
\algnewcommand\algorithmicforeach{\textbf{for each}}
\algnewcommand\AND{\textbf{ and }}
\author[*]{Valentino Vito}
\author[ ]{Lim Yohanes Stefanus}
\affil[ ]{Faculty of Computer Science, Universitas Indonesia, Depok 16424, Indonesia}
\affil[ ]{E-mail addresses: valentino.vito11@ui.ac.id, yohanes@cs.ui.ac.id}
\affil[*]{Corresponding author}
\begin{document}

\title{Adaptive Monte Carlo search for conjecture refutation in graph theory}
\maketitle

\setstretch{1.5}

\begin{abstract}
Graph theory is an interdisciplinary field of study that has various applications in mathematical modeling and computer science. Research in graph theory depends on the creation of not only theorems but also conjectures. Conjecture-refuting algorithms attempt to refute conjectures by searching for counterexamples to those conjectures, often by maximizing certain score functions on graphs. This study proposes a novel conjecture-refuting algorithm, referred to as the adaptive Monte Carlo search (AMCS) algorithm, obtained by modifying the Monte Carlo tree search algorithm. Evaluated based on its success in finding counterexamples to several graph theory conjectures, AMCS outperforms existing conjecture-refuting algorithms. The algorithm is further utilized to refute six open conjectures, two of which were chemical graph theory conjectures formulated by Liu et al.\ in 2021 and four of which were formulated by the AutoGraphiX computer system in 2006. Finally, four of the open conjectures are strongly refuted by generalizing the counterexamples obtained by AMCS to produce a family of counterexamples. It is expected that the algorithm can help researchers test graph-theoretic conjectures more effectively.
\end{abstract}

\begin{IEEEkeywords}
Artificial intelligence, mathematical conjecture, chemical graph theory, spectral graph theory, Monte Carlo search.
\end{IEEEkeywords}

\section{Introduction}
Graph theory is a fast-growing field of study with numerous applications in modeling various objects including computer networks, semantic networks, graphical models, and molecular compounds. Research in graph theory highly depends on the creation of new theorems and conjectures. Unlike theorems, conjectures are mathematical statements that have not been proved to be either true or false \citep{gera}. The resolution of conjectures may bring forth new theorems and advance the development of the field of study. In addition, conjecture-making promotes collaboration between mathematicians, so it is a vital aspect of mathematical research.

Two subfields of graph theory with an abundance of conjectures are spectral graph theory and chemical graph theory. In spectral graph theory, matrices associated with graphs, along with their eigenvalues, are linked to the structural properties of graphs \citep{nica}. In chemical graph theory, molecular structures of chemical compounds are analyzed via graphs \citep{wagner2018}. Chemical graph theory allows the analysis of chemical compounds to be done outside the laboratory, with computations performed using a computer instead of relying on physical samples. As such, chemical graph theory plays a role in the advancement of chemical research, especially in drug discovery \citep{estrada}.

Various computer systems have been developed for spectral and chemical graph theory research since the late 20th century. \citet{cvetkovic} created an expert system for graph theory known as GRAPH. It is a useful tool to assist in computing numerous graph-theoretical properties. Using a database of known graphs, \citet{fajtlowicz} developed a conjecture-finding computer program called Graffiti, which searches for conjectures whose statements involve an inequality. More recently, \citet{hansen2000} created AutoGraphiX, an automated system that can be used to discover new conjectures using the variable neighborhood search metaheuristic \citep{hansen2001}.

In contrast to conjecture-finding algorithms, conjecture-refuting algorithms attempt to refute known conjectures by searching for counterexamples. The refutation of conjectures is beneficial to narrow down true conjectures. \citet{wagner2021} applied the deep cross-entropy method, a reinforcement learning algorithm, to refute several conjectures in combinatorics and graph theory. Improving on this work, \citet{roucairol} applied algorithms based on Monte Carlo tree search to refute conjectures in spectral graph theory in a much faster way than with Wagner's method, namely by the nested Monte Carlo search (NMCS) and the nested rollout policy adaptation (NRPA) algorithms.

This study further develops NMCS to be more successful in searching for graph counterexamples to graph theory conjectures. The conjectures considered in this study specifically belong to spectral and chemical graph theory. This is due to the large number of tractable conjectures that are suitable for computation within the two subfields. The proposed algorithm of this study is referred to as the \textit{adaptive Monte Carlo search} (AMCS) algorithm. The algorithm is evaluated based on its success in finding a counterexample when applied to various conjectures, which consist of four already resolved conjectures and six currently open conjectures. Two of the open conjectures are chemical graph theory conjectures formulated by \citet{liu}, whereas the rest are formulated via the AutoGraphiX system. It is expected that the proposed algorithm, whose code is publicly accessible\footnote{Source code: \url{https://github.com/valentinovito/Adaptive_MC_Search}}, can help researchers test conjectures in a more effective and systematic manner. Moreover, based on structural patterns present within AMCS's counterexamples, a generalization of the counterexamples by way of obtaining a family of counterexamples is provided for four of the open conjectures.

\section{Research questions}
This research is motivated by the limitations of existing algorithms in finding counterexamples for various types of conjectures. The state-of-the-art Monte Carlo search algorithms applied by \citet{roucairol}, namely NMCS and NRPA, were unable to refute every spectral graph theory conjecture that was tested against them. Therefore, there is a need to develop a more successful algorithm that is able to refute conjectures in which previous algorithms fail.

\begin{center}
\begin{enumerate}[\textbf{RQ1.}]
    \item Is there a more successful algorithm that is able to refute more conjectures than existing algorithms? 
\end{enumerate}
\end{center}

To address RQ1, we propose a novel Monte Carlo search algorithm: the adaptive Monte Carlo search (AMCS). AMCS is designed to handle various types of conjectures beyond spectral graph theory. The performance of the proposed algorithm is measured by the number of conjectures it successfully refutes. A conjecture is successfully refuted if a counterexample is found within a reasonable time limit, which is set to $12$ hours in this study. Hence, AMCS is said to be more successful if it is able to refute more conjectures than NMCS and NRPA, where the set of conjectures under consideration consists of four conjectures that have previously been tested on NMCS and NRPA.

The set of four conjectures tested by \citet{roucairol} consists only of resolved conjectures in spectral graph theory which are already known to be false. To further test the versatility of the proposed algorithm, additional tests are performed using six open conjectures in both spectral and chemical graph theory. To our knowledge, the correctness of these open conjectures is not yet known previously.

\begin{center}
\begin{enumerate}[\textbf{RQ2.}]
    \item Is the proposed algorithm suitable for use on currently open conjectures?
\end{enumerate}
\end{center}

RQ2 is addressed by finding open conjectures from graph theory papers that can potentially be refuted. Since these conjectures are still open, there is a possibility that the algorithm fails to find a counterexample due to these conjectures being true. But if the algorithm manages to refute several open conjectures, then there is a strong indication that the algorithm is suitable for other open conjectures in the literature.

\section{Literature review}
In this section, a literature study on relevant subjects, including preliminaries in graph theory and search algorithms applicable to graph theory, is provided.

\subsection{Graph-theoretic preliminaries}
Formally, a \textit{graph} is a pair $G = (V, E)$ of sets such that the elements of $E$ are $2$-element subsets of $V$ \citep{diestel}. The elements of $V$ are called the \textit{vertices} of $G$, and the elements of $E$ are called the \textit{edges} of $G$. An edge $\{u, v\}$ is often written as $uv$ for brevity. The \textit{vertex set} $V$ and \textit{edge set} $E$ of a $G$ can also be written as $V(G)$ and $E(G)$, respectively. Given a graph $G = (V, E)$, its \textit{complement} $\overline{G}$ is a graph whose vertex set is $V$ and whose edges are the pairs of nonadjacent vertices of $G$ \citep{bondy}. The \textit{order} of $G$ is the number of its vertices, whereas its \textit{size} is the number of its edges. Vertices $u, v \in V(G)$ are \textit{adjacent} if $uv \in E(G)$. If any two vertices in $G$ are adjacent, then $G$ is \textit{complete} and denoted by $K_n$. The \textit{degree} of $v$, denoted by $d_v$, is the number of vertices adjacent to $v$.

The \textit{path} $P_n = v_1v_2\dots v_n$, where the $v_i$'s are all distinct, is a graph defined by $V(P_n) = \{v_1, v_2, \dots, v_n\}$ and $E(P_n) = \{v_1v_2, v_2v_3, \dots, v_{n-1}v_n\}$. Here, $v_1$ and $v_n$ form the \textit{endpoints} of $P_n$. The \textit{cycle} $C_n$, $n \ge 3$, is obtained by adding the edge $v_1v_n$ to $P_n$. The \textit{star} $S_n$ is a graph of order $n$ such that one of its vertices (called its \textit{center}) has degree $n - 1$, while the rest have degree $1$. The \textit{distance} $d(u, v)$ of two vertices $u, v \in V(G)$ is the length of the shortest path from $u$ to $v$. A graph $G$ is \textit{connected} if any two of its vertices are endpoints of some path in $G$. A graph $G$ is \textit{acyclic} if it does not contain any cycles. A connected, acyclic graph is called a \textit{tree}, which is often denoted by $T$ instead of $G$. A vertex of degree $1$ is often called a \textit{leaf}. A connected graph on $n$ vertices is a tree if and only if it has $n - 1$ edges \citep{diestel}.

\subsection{Spectral graph theory}
Spectral graph theory studies the properties of graphs through matrices associated with them. The adjacency matrix is an example of such a matrix. The \textit{adjacency matrix} $A$ of a graph $G$ is a square matrix indexed by $V(G)$ such that $A(u, v) = 1$ whenever $u$ is adjacent to $v$ and $A(u, v) = 0$ otherwise. \citet{graham} defined the \textit{distance matrix} $D$ of $G$ as a square matrix indexed by $V(G)$ such that $D(u, v) = d(u, v)$, where $d$ is the distance function. The adjacency and distance matrices are symmetric, so their eigenvalues are all real numbers \citep{godsil}. The set of eigenvalues $\lambda_1 \ge \lambda_2 \ge \dots \ge \lambda_n$ of an $n \times n$ adjacency matrix, repeated according to their multiplicity, is called the \textit{adjacency spectrum}. Likewise, the set of eigenvalues $\partial_1 \ge \partial_2 \ge \dots \ge \partial_n$ of an $n \times n$ distance matrix is called the \textit{distance spectrum}. The characteristic polynomials of the adjacency matrix and the distance matrix are denoted by $\CPA(G)$ and $\CPD(G)$, respectively. Another matrix associated with graphs is the \textit{Laplacian matrix} $L$, which is a square matrix indexed by $V(G)$ such that $L(u, v) = d_v$ if $u = v$, $L(u, v) = -1$ if $u$ is adjacent to $v$, and $L(u, v) = 0$ otherwise \citep{nica}.

\subsection{Chemical graph theory}
Chemical graph theory is a field of study that analyzes the molecular structure of chemical compounds through graphs \citep{wagner2018}. One aim of chemical graph theory is to study the qualities of various compounds by discovering the structural properties of the graphs representing them. In order to quantify the structural properties of graphs, certain measures (or \textit{invariants}) referred to as \textit{topological indices} are defined on these graphs. A topological index provides a summary of the topological structure of a graph in the form of a numerical quantity.

A well-known example of a topological index is the \textit{Randi\'c index} introduced by \citet{randic}. Instead of being defined as the sum of distances, the Randi\'c index $R(G)$ is defined based on degrees:
\[R(G) = \sum_{uv \in E(G)} (d_u d_v)^{-\frac{1}{2}} = \sum_{uv \in E(G)} \frac{1}{\sqrt{d_u d_v}}.\]
A more general version of the Randi\'c index, known as the \textit{generalized Randi\'c index} $R_\alpha(G)$, is given by the formula
\[R_\alpha(G) = \sum_{uv \in E(G)} (d_u d_v)^\alpha.\]
In particular, the index $R_1(G)$ is sometimes referred to as the \textit{second Zagreb index} $M_2(G)$ \citep{wagner2018}.

\subsection{Conjecture-related algorithms}
This subsection presents several previous studies on algorithms concerning mathematical conjectures, with an emphasis on conjectures related to graph theory. The two types of algorithms discussed here are conjecture-finding algorithms and conjecture-refuting algorithms.

\subsubsection{Conjecture-finding algorithms}
An early computer program designed to discover new conjectures was written in 1984 and was known by the name Graffiti \citep{fajtlowicz}. The program tests potential graph theory conjectures using a database of known graphs. If none of the graphs in the database is a counterexample to the conjecture, then the conjecture is proposed as a potential theorem. Graffiti was applied by \citet{fajtlowicz} to conjectures of the form $I \le J$, $I \le J + K$, or $I + J \le K + L$, where $I$, $J$, $K$, and $L$ are either graph invariants (such as the Randi\'c index) or equal to the constant $1$.

A more modern computer system, known as the AutoGraphiX system \citep{hansen2000}, employed a more sophisticated algorithm to find conjectures. AutoGraphiX specifically tackles extremal problems in graph theory, thereby providing a connection between combinatorial optimization and extremal graph theory \citep{aouchiche2008}. Given connected graphs $G$ with $n = \lvert V(G) \rvert$, AutoGraphiX creates conjectures of the form $\ell_n \le I \oplus J \le u_n$, where $I$ and $J$ are invariants, $\oplus$ is one of the operations in the set $\{+,-,\times,/\}$, and $\ell_n$ and $u_n$ are lower and upper bounds, respectively, of $I \oplus J$ expressed as a function of $n$ \citep{aouchiche2010}. AutoGraphiX first applies the variable neighborhood search metaheuristic to find, for every $n$, an \textit{extremal graph} $G_n$ (that is, a graph $G_n$ such that the numerical value of $I \oplus J$ is either maximized or minimized over all graphs of order $n$). Afterward, $\ell_n$ is set to $I(G_n) \oplus J(G_n)$ if $G_n$ minimizes $I \oplus J$, and $u_n$ is set to $I(G_n) \oplus J(G_n)$ if $G_n$ maximizes $I \oplus J$. 

\subsubsection{Conjecture-refuting algorithms}
Numerous conjectures in graph theory can be refuted by providing graph counterexamples to said conjectures. However, exhaustive search is usually not a feasible approach, even for small graphs. For illustration, there are over a billion distinct connected graphs on $11$ vertices alone \citep{sloane}. As a result, a more effective search algorithm is usually necessary to obtain a counterexample within a reasonable amount of time. In practice, this combinatorial search problem can be reformulated as a combinatorial optimization problem, where a score function associated with the conjecture is maximized to obtain the desired counterexample.

\citet{wagner2021} applied a deep reinforcement learning algorithm called the deep cross-entropy method for the conjecture refutation task. The method uses a neural network to construct a number of graphs each iteration and keeps only the top-scoring constructions. Based on the constructions that are kept, the weights of the neural network are then adjusted to minimize the cross-entropy loss. As a result, the neural network learns from the best-scoring graph constructions so that these graphs are more likely to be constructed during the next iteration. This method was also able to be modified to be used on other combinatorial conjectures other than graph theory.

Wagner's method managed to refute an AutoGraphiX conjecture on the largest eigenvalue of the adjacency matrix of a graph. However, it took $5000$ iterations to obtain the graph that can act as its counterexample. To improve the search time of the algorithm, \citet{roucairol} employed Monte Carlo search algorithms on the same task. Using the nested Monte Carlo search (NMCS) and the nested rollout policy adaptation (NRPA) algorithms, the same AutoGraphiX conjecture, along with other conjectures, was able to be refuted within seconds. This effectively makes NMCS and NRPA the state-of-the-art algorithms for the conjecture refutation task in graph theory.

\subsection{Monte Carlo search}
Monte Carlo tree search (MCTS) is a probabilistic algorithm for finding optimal decisions by taking random samples in the decision space \citep{browne}. Its primary application is in constructing artificial intelligence for playing games, for which MCTS searches for optimal decisions in certain game trees. Numerous variations of MCTS have been designed to handle specific tasks. In particular, \citet{cazenave} introduced a variant of MCTS known as the nested Monte Carlo search (NMCS).

NMCS is a recursive algorithm that requires two parameters: \textit{depth} and \textit{level}. Essentially, the algorithm recursively calls lower-level NMCS on child states of the current state in order to decide which move to play next \citep{roucairol}. Child states refer to states that can be obtained from the current state by performing any one move. On each child state, NMCS calls itself, but its level is decremented to $\textit{level} - 1$ for the call. This is precisely the nested feature that distinguishes NMCS from ordinary MCTS. The recursive property of NMCS further increases the computational complexity of the algorithm in exchange for a significantly more thorough search. At \textit{level} = 0, NMCS performs $k$ random moves, where $k = \textit{depth}$. The resulting state is designated as the terminal state associated with the current child state. This terminal state will become the new best state if it improves the score of the current best state.

Nested rollout policy adaptation (NRPA) is a modification of NMCS. In essence, NRPA is a combination of NMCS and Q-learning. The algorithm uses a policy that attributes a value to a move \citep{roucairol}. As with NMCS, the NRPA algorithm is a recursive algorithm that calls lower-level NRPA to decide which move to play next. But at $\textit{level} = 0$, NRPA performs random moves using a softmax distribution based on its continually updated policy.

\begin{algorithm}
\caption{Nested Monte Carlo search for the space of trees.}\label{alg:nmcs_trees}
\begin{algorithmic}[1]
\Require{A score function $s\colon \G \to \R$ to be maximized.}
\Function{NMCSTrees}{$G$, $\textit{depth}$, $\textit{level}$}
    \State $\textit{bestGraph} \gets G$;
    \State $\textit{bestScore} \gets s(G)$;
    \If{\textit{level} = 0}
        \State $G' \gets G$;
        \For{$i = 1$ to $\textit{depth}$}
            \State Add a random leaf or subdivision to $G'$;
        \EndFor
        \If{$s(G') > \textit{bestScore}$}
            \State $\textit{bestGraph} \gets G'$;
        \EndIf
    \Else
        \ForEach{$x \in V(G) \cup E(G)$}
            \State $G' \gets G$;
            \If{$x$ is a vertex}
                \State Add a leaf to $G'$ on the vertex $x$;
            \Else
                \State Add a subdivision to $G'$ on the edge $x$;
            \EndIf
            \State $G' \gets$ \Call{NMCSTrees}{$G'$, $\textit{depth}$, $\textit{level} - 1$};
            \If{$s(G') > \textit{bestScore}$}
                \State $\textit{bestGraph} \gets G'$; $\textit{bestScore} \gets s(G')$;
            \EndIf
        \EndFor
    \EndIf
    \State \Return $\textit{bestGraph}$;
\EndFunction
\end{algorithmic}
\end{algorithm}

\begin{algorithm}
\caption{Nested Monte Carlo search for the space of connected graphs.}\label{alg:nmcs_conngraphs}
\begin{algorithmic}[1]
\Require{A score function $s\colon \G \to \R$ to be maximized.}
\Function{NMCSConnectedGraphs}{$G$, $\textit{depth}$, $\textit{level}$}
    \State $\textit{bestGraph} \gets G$;
    \State $\textit{bestScore} \gets s(G)$;
    \If{\textit{level} = 0}
        \State $G' \gets G$;
        \For{$i = 1$ to $\textit{depth}$}
            \State Add a random leaf, subdivision, or edge to $G'$;
        \EndFor
        \If{$s(G') > \textit{bestScore}$}
            \State $\textit{bestGraph} \gets G'$;
        \EndIf
    \Else
        \ForEach{$x \in V(G) \cup E(G) \cup E(\overline{G})$}
            \State $G' \gets G$;
            \If{$x$ is a vertex}
                \State Add a leaf to $G'$ on the vertex $x$;
            \ElsIf{$x$ is an edge in $G$}
                \State Add a subdivision to $G'$ on the edge $x$;
            \Else
                \State Add the edge $x$ to $G'$;
            \EndIf
            \State $G' \gets$ \Call{NMCSConnectedGraphs}{$G'$, $\textit{depth}$, $\textit{level} - 1$};
            \If{$s(G') > \textit{bestScore}$}
                \State $\textit{bestGraph} \gets G'$; $\textit{bestScore} \gets s(G')$;
            \EndIf
        \EndFor
    \EndIf
    \State \Return $\textit{bestGraph}$;
\EndFunction
\end{algorithmic}
\end{algorithm}

NMCS can be applied to graph optimization problems of the form $\max\{s(G): G \in \G\}$, where $\G$ denotes the feasible set of graphs and $s(G)$ denotes a real-valued objective function referred to as a \textit{score function}. The score function $s(G)$ is chosen such that the conjecture is refuted by a graph $G \in \G$ as its counterexample if $s(G) > 0$. The feasible set $\G$ is designated as the search space of the algorithm, and it depends on the hypotheses of the conjecture under consideration. In this study, $\G$ is restricted to either the set of connected graphs or the set of trees with sufficiently many vertices.

Two NMCS algorithms for the aforementioned graph optimization problem are provided in Algorithms \ref{alg:nmcs_trees} (when the search space $\G$ consists of trees) and \ref{alg:nmcs_conngraphs} (when the search space $\G$ consists of connected graphs). Given a graph $G \in \G$, both algorithms include two possible moves that can be carried out to expand $G$ in order to explore the search space: adding a leaf to some vertex or performing an edge subdivision. To \textit{subdivide} an edge $uv$ is to delete $uv$, add a new vertex $w$, and join $w$ to both $u$ and $v$ \citep{bondy}.

In essence, Algorithm \ref{alg:nmcs_trees} attempts to discover which combination of the aforementioned two moves is likely to produce a terminal graph that maximizes the score function. Since adding a leaf or subdivision to a tree produces another tree, Algorithm \ref{alg:nmcs_trees} is guaranteed to only search within the class of trees as long as the initial graph $G$ is a tree. Algorithm \ref{alg:nmcs_conngraphs} differs from Algorithm \ref{alg:nmcs_trees} in that it includes a third possible move to expand a graph, which is to add an edge to a pair of existing vertices. This move cannot be included in Algorithm \ref{alg:nmcs_trees} since the move may produce a non-tree graph. However, the addition of an extra edge cannot disconnect a connected graph. Hence, as long as the initial graph $G$ is connected, Algorithm \ref{alg:nmcs_conngraphs} only operates within the search space of connected graphs. Having this third possible move increases the complexity of the algorithm, but should be utilized if Algorithm \ref{alg:nmcs_trees} fails to produce a counterexample.

\section{Conjectures for experimentation}

There are a total of ten conjectures that are used for the experiments, and they are stated in Conjectures \ref{conj_1}--\ref{conj_10}. The conjectures are classified into two groups: (1) four resolved conjectures previously refuted by another algorithm, and (2) six open conjectures which have never been refuted at the time of this writing. The resolved conjectures are employed to compare the proposed algorithm to state-of-the-art conjecture-refuting algorithms by \citet{roucairol} (addressing RQ1), while the open conjectures are employed to expose the proposed algorithm to as yet unsolved problems in the field (addressing RQ2).

\subsection{Resolved conjectures}

For the experiments, four conjectures are used to evaluate the performance of the proposed algorithm. All four conjectures lie in spectral graph theory. These conjectures were previously applied by \citet{roucairol} to test their Monte Carlo algorithms. Before the conjectures can be stated, several graph-theoretical notions need to be provided.

The \textit{diameter} $D(G)$ of a graph $G$ is the maximum possible distance $d(u, v)$ between two vertices $u, v \in V(G)$ \citep{aouchiche2016}, namely
\[D(G) = \max_{\{u, v\} \subseteq V(G)} d(u, v).\]
On the other hand, the \textit{proximity} $\pi(G)$ of a graph $G$ is the minimum average distance from a vertex of $G$ to all the other vertices \citep{aouchiche2016}, namely
\[\pi(G) = \min_{u \in V(G)} \frac{\sum_{v \in V(G)} d(u, v)}{n - 1}.\]
The largest eigenvalue and second largest eigenvalue of the adjacency matrix of $G$ are denoted by $\lambda_1(G)$ and $\lambda_2(G)$, respectively. A \textit{matching} in a graph $G$ is a set of pairwise nonadjacent edges of $G$, and the \textit{matching number} $\mu(G)$ of $G$ is the maximum possible size of a matching in $G$ \citep{bondy}.

The first two resolved conjectures, both proposed by Aouchiche and Hansen, discuss bounds involving the adjacency and distance spectrums of $G$.

\begin{conjecture}[\citealp{aouchiche2010}]\label{conj_1}
Let $G$ be a connected graph on $n \ge 3$ vertices. Then
\begin{equation}\label{eq_1}
\lambda_1(G) + \mu(G) \ge \sqrt{n - 1} + 1.
\end{equation}
\end{conjecture}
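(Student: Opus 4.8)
The plan is to first pin down the equality case. Since the star $S_n$ has $\lambda_1(S_n)=\sqrt{n-1}$ and $\mu(S_n)=1$, inequality (\ref{eq_1}) holds with equality for $S_n$, so proving the conjecture amounts to showing that the star minimises $\lambda_1(G)+\mu(G)$ over all connected graphs on $n$ vertices. The natural route is to bound $\lambda_1(G)$ from below by a structural quantity and then trade that quantity off against $\mu(G)$. The cleanest such bound is $\lambda_1(G)\ge\sqrt{\Delta}$, where $\Delta$ denotes the maximum degree of $G$: a vertex of degree $\Delta$ together with its neighbours spans a copy of $K_{1,\Delta}$, and $\lambda_1$ is monotone under taking subgraphs, so $\lambda_1(G)\ge\lambda_1(K_{1,\Delta})=\sqrt{\Delta}$.

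Next I would connect $\mu(G)$ to $\Delta$. The endpoints of a maximum matching form a set of at most $2\mu(G)$ vertices meeting every edge of $G$, so the remaining $n-2\mu(G)$ vertices form an independent set each of whose neighbours lies in that set; counting the edges leaving it gives $\Delta\ge (n-2\mu(G))/(2\mu(G))$. Writing $k=\mu(G)$, this yields
\[
\lambda_1(G)+\mu(G)\ \ge\ \sqrt{\frac{n-2k}{2k}}+k .
\]
Here is where the approach stalls, and this is the main obstacle: the right-hand side, minimised over $k$, is only of order $n^{1/3}$ (the optimum sits near $k\asymp n^{1/3}$), which is far below the target $\sqrt{n-1}+1$. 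This gap is not an artefact of a lossy estimate — the bound $\lambda_1\approx\sqrt{\Delta}$ is essentially tight for graphs assembled from a few high-degree stars — so it strongly signals that the statement is in fact false for large $n$.

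Accordingly, rather than a proof I would hunt for a counterexample, guided by the profile that makes the lower bound above small: a graph with $\mu$ held at a small constant (say $2$) while $\lambda_1$ is forced down to roughly $\sqrt{n/2}$. The balanced double star on $n=2p+2$ vertices — two adjacent centres, each carrying $p$ leaves — does exactly this, with $\mu=2$ and $\lambda_1=\tfrac12\bigl(1+\sqrt{1+4p}\,\bigr)\approx\sqrt{p}$; a one-line estimate then shows $\lambda_1+2<\sqrt{n-1}+1$ once $n$ is moderately large, and inserting a short path between the two centres, or using a caterpillar with a few spine vertices and many pendant leaves, pushes the violation further toward the $n^{1/3}$ regime. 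Locating the smallest and structurally cleanest such counterexample is precisely the search problem that AMCS is built to automate, so in the end the "proof" of this statement is really its refutation.
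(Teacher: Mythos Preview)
Your proposal reaches the correct conclusion: the statement is a \emph{false} conjecture, and the appropriate task is refutation rather than proof. Your heuristic route---observing that $\lambda_1\ge\sqrt{\Delta}$ combined with the matching/degree trade-off only yields an $n^{1/3}$-type lower bound---is a sound diagnostic, and the double-star family you propose does work: with $n=2p+2$, $\mu=2$, and $\lambda_1=\tfrac12(1+\sqrt{1+4p})$, a short calculation shows the inequality fails for all $p\ge 13$ (i.e.\ $n\ge 28$).

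The paper's approach is different in character. It does not attempt any analytical argument; instead it treats the conjecture purely as a search problem, running AMCS from a small random tree and letting the algorithm add leaves until a graph with positive score $s_1(G)=\sqrt{n-1}+1-\lambda_1(G)-\mu(G)$ is found (after $12$ iterations, score $\approx 0.02181$). The counterexample it lands on is structurally the same kind of object you predict---a tree with a couple of high-degree hubs and many pendants---and indeed the paper later analyses exactly this family $T(2,b)$ for the closely related Conjecture~\ref{conj_9} (which coincides with Conjecture~\ref{conj_1} on trees via K\H{o}nig's theorem, since $\alpha=n-\mu$). What your write-up buys is an explanation of \emph{why} such graphs should be extremal and an explicit infinite family without any computation; what the paper's approach buys is an automated, conjecture-agnostic procedure that requires no structural insight and can be pointed at problems where the extremal profile is not obvious in advance.
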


\begin{conjecture}[\citealp{aouchiche2016}]\label{conj_2}
Let $G$ be a connected graph on $n \ge 4$ vertices with distance spectrum $\partial_1 \ge \partial_2 \ge \dots \ge \partial_n$. Then
\begin{equation}\label{eq_2}
\pi(G) + \partial_{\left\lfloor\frac{2}{3}D(G)\right\rfloor} > 0.
\end{equation}
\end{conjecture}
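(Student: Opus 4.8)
The conjecture as stated is \emph{false} --- it is one of the four already-refuted ``resolved'' conjectures described above --- so the task is not to prove it but to produce, and if possible package, a refutation. The plan is to recast the statement as the optimisation problem $\max\{\, s(G) : G \text{ connected},\ n \ge 4 \,\}$ with score function $s(G) = -\bigl(\pi(G) + \partial_{\lfloor 2D(G)/3 \rfloor}(G)\bigr)$, so that any graph with $s(G) > 0$ is a counterexample, and to run a Monte Carlo search of the form in Algorithm~\ref{alg:nmcs_conngraphs}, or the AMCS variant developed here, on it. One preliminary observation sharpens the target considerably: for any connected graph on at least two vertices $\pi(G) \ge 1$, with equality exactly when $G$ has a dominating vertex and hence $D(G) \le 2$; since $\lfloor 2D(G)/3 \rfloor \ge 2$ precisely when $D(G) \ge 3$, every counterexample must have $D(G) \ge 3$ and therefore $\partial_{\lfloor 2D(G)/3 \rfloor}(G) < -\pi(G) < -1$. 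So the search is really hunting for a graph that keeps a reasonably central vertex (to hold $\pi(G)$ down) while its distance matrix carries an eigenvalue well below $-1$ in the prescribed position.

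Structurally, I would both steer the search and expect the eventual counterexample to look like a ``blow-up'' of a small graph: vertices replaced by cliques or by independent sets, joined according to a fixed small pattern. The distance matrix of such a graph is nearly block-structured, with a large-multiplicity negative eigenvalue (one from each blown-up part) plus only a handful of outlier eigenvalues, one of which can be pushed below $-\pi(G)$ while the diameter is held small --- ideally $D(G) = 3$ or $4$, so the relevant index is just $2$ and we only need $\partial_2(G) < -\pi(G)$. Concretely I would initialise Algorithm~\ref{alg:nmcs_conngraphs} from small complete or complete split graphs and let the leaf, subdivision and edge moves explore the neighbourhood; the subdivision moves are what manufacture the distance-$3$ pairs that raise $D(G)$ to $3$ without appreciably inflating the proximity.

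Given a small counterexample $G_0$ returned by the search, the further step --- a human-checkable refutation, ideally an infinite family --- is to read off the shape of $G_0$, parametrise a family $\{G_n\}$ through it, and verify $s(G_n) > 0$ for all large $n$ directly. The diameter and proximity of a structured family are elementary ($\pi(G_n)$ is a single finite sum of distances from the candidate central vertex), so the crux is the distance spectrum: I would exhibit an equitable partition of $G_n$, pass to the quotient matrix of $D(G_n)$, and argue that the spectrum of $D(G_n)$ is the short list of quotient-matrix eigenvalues together with an explicit multiset of ``local'' eigenvalues from the blown-up parts; then order that list and check that the entry in position $\lfloor 2D(G_n)/3 \rfloor$ is one of the negative outliers and remains strictly below $-\pi(G_n)$ as $n \to \infty$.

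The main obstacle is the eigenvalue index itself. Because $\lfloor 2D(G)/3 \rfloor$ depends on $G$ and jumps as the diameter grows, $s$ is neither continuous nor monotone under the elementary moves, so there is no clean structural lever --- interlacing in $n$, a monotonicity argument, or the like --- that forces $s(G_n) > 0$; this is exactly why a heuristic search is the appropriate tool for locating the seed example. For the generalisation, the difficulty is obtaining a genuinely closed-form grip on the distance eigenvalues of the family and then pinning down \emph{which} eigenvalue lands in position $\lfloor 2D(G_n)/3 \rfloor$: this succeeds only if the family is chosen with enough symmetry for the quotient decomposition to apply, and even then the ordering argument needs care right at the threshold position.
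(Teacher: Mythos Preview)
Your framing is correct --- this is a refutation, the score function $s_2(G) = -\pi(G) - \partial_{\lfloor 2D(G)/3\rfloor}$ matches the paper's exactly, and Monte Carlo search is indeed the tool used. But your structural intuition about where the counterexample lives runs in the opposite direction from what the paper actually found. You steer toward small-diameter blow-ups of dense seeds (complete or complete split graphs, $D(G)=3$ or $4$, clique/independent-set substitutions), initialised via Algorithm~\ref{alg:nmcs_conngraphs}. The paper instead restricts the search to \emph{trees} (\textit{treesOnly} set to true), initialises from the path $P_{13}$, and the counterexample it produces is a $203$-vertex tree: the star $S_{191}$ with a $P_7$ and a $P_5$ hanging off its centre, so the diameter is large ($D=12$, relevant index $\lfloor 2D/3\rfloor = 8$) and the proximity is kept small by the enormous star hub rather than by denseness. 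The paper flags this conjecture as the hardest of the four resolved ones --- NMCS and NRPA both failed on it, and AMCS needed $16.5$ minutes --- so the concern with your plan is not that it is logically unsound but that the small-diameter dense region you aim at may simply contain no counterexample, and the search would stall as the earlier algorithms did.

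Two further points. First, your preliminary bound $\pi(G)\ge 1$ and the diameter-threshold observation are fine, but they do not discriminate between your target region and the paper's; both satisfy $D\ge 3$. Second, the paper does \emph{not} carry out the family-generalisation step for Conjecture~\ref{conj_2}: that analysis (equitable partitions, quotient matrices, asymptotic eigenvalue ordering) is done only for Conjectures~\ref{conj_5}, \ref{conj_6}, \ref{conj_9}, and \ref{conj_10}. Your proposed extension is reasonable in spirit, but for the star-plus-two-paths shape the distance spectrum has no closed form as clean as the ones available for $T(2,b)$, and with the index at position $8$ the ordering argument you describe would be delicate; the paper simply reports the single numerical counterexample and leaves it at that.
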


Let $A$ and $D$ denote the adjacency and distance matrix of a tree $T$, respectively. Also, let $CPA(G) = \sum_{i = 0}^m a_ix^{\alpha_i}$ (where $\alpha_0 < \alpha_2 < \dots < \alpha_m$ and $a_i \neq 0$ for all $i$) and $CPD(G) = \sum_{i = 0}^n \delta_ix^i$ be the characteristic polynomials of $A$ and $D$, respectively. The third resolved conjecture is about the position of the peak coefficient of those two characteristic polynomials. The position of the peak of the absolute values of the non-zero coefficients $\lvert a_0 \rvert, \dots, \lvert a_m \rvert$ of $CPA(T)$ is denoted by $p_A(T)$, while the number of non-zero coefficients of $CPA(T)$ is denoted by $m(T)$. For $0 \le i \le n - 2$, let $d_i = \frac{2^i}{2^{n-2}} \lvert\delta_i\rvert$ be the \textit{normalized coefficients} of $CPD(T)$. The position of the peak of the normalized coefficients $d_0, \dots, d_{n - 2}$ of $CPD(T)$ is denoted by $p_D(T)$.

\begin{conjecture}[\citealp{collins}]\label{conj_3}
The peak of the sequence of absolute values of the non-zero coefficients of $CPA(T)$ (that is, the peak of $(\lvert a_i \rvert)_{0 \le i \le m}$) is relatively located at the same place as the peak of the sequence of normalized coefficients of $CPD(T)$ (that is, the peak of $(d_i)_{1 \le i \le n - 2}$) when the locations of the peaks are counted from opposite ends of the two sequences. In other words,
\begin{equation}\label{eq_3}
\frac{p_A(T)}{m(T)} = 1 - \frac{p_D(T)}{n}.
\end{equation}
\end{conjecture}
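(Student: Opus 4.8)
Since the statement is posed as a \emph{conjecture} that this paper sets out to stress-test with AMCS rather than as a theorem, the sensible plan is not to prove it but to attempt a \emph{refutation}: search the space of trees for a $T$ with $\frac{p_A(T)}{m(T)} \neq 1 - \frac{p_D(T)}{n}$. I would cast this as a maximization problem by choosing a score function such as
\[
s(T) = \left| \frac{p_A(T)}{m(T)} + \frac{p_D(T)}{n} - 1 \right| - \varepsilon,
\]
(or the one-sided variants obtained by dropping the absolute value), so that any tree with $s(T) > 0$ is a counterexample, and then run Algorithm~\ref{alg:nmcs_trees} — or its adaptive variant AMCS — on this $s$, using leaf-addition and edge-subdivision moves and starting from a small path or star. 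Because the search never leaves the class of trees under these moves, feasibility is automatic.

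Before turning on the search I would settle the easy structured families by hand or by brute force: for paths $P_n$, stars $S_n$, brooms, spiders, and small caterpillars, compute $p_A$, $m$, and $p_D$ and check whether \eqref{eq_3} holds. The conjecture was presumably extracted from exactly such families, so I expect equality there, which tells the search it must drift toward ``lopsided'' trees where the adjacency and distance coefficient sequences peak at genuinely different relative positions. On the adjacency side this is tractable: for a forest the nonzero coefficients of $\CPA(T)$ are, up to sign, the matching numbers $m_k(T)$, so $p_A(T)$ is just the index of the largest $m_k$, whose behaviour is well understood. The distance side is the real obstacle — there is no clean combinatorial formula for the coefficients $\delta_i$ of $\CPD(T)$, so $p_D(T)$ must be obtained by forming the distance matrix $D$ and computing its characteristic polynomial, ideally in exact integer arithmetic so that a flat or near-flat maximum is not mislocated by floating-point error.

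Concretely, the steps are: (1) fix the search space to trees on, say, $15$--$30$ vertices; (2) implement exact computation of $\CPA(T)$, $\CPD(T)$, the normalized coefficients $d_i$, and the peak-index statistics $p_A, m, p_D$, while pinning down the exact convention — including the index range ($d_0$ versus $d_1$) and the tie-breaking rule for a flat maximum (leftmost, rightmost, or ``ambiguous $\Rightarrow$ discard'') — since this directly determines whether a near-miss counts; (3) wrap these in the score function above; (4) run AMCS with increasing depth/level until a counterexample appears or the $12$-hour budget is exhausted; (5) if a counterexample $T^{*}$ is found, inspect its degree sequence and branch lengths and try to lift it to an infinite family $\{T_k\}$ by inserting a repeating gadget, then verify $\frac{p_A(T_k)}{m(T_k)} \neq 1 - \frac{p_D(T_k)}{n_k}$ exactly or asymptotically.

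The main difficulty I anticipate is twofold. First, the objective is integer-valued and badly non-monotone under the moves: adding a single leaf can shift a peak index by one while also changing $m(T)$ or $n$ by one, so the landscape is a coarse step function with large flat plateaus — this is precisely why a Monte Carlo search rather than a local, gradient-like method is the right tool, but it also means the search may wander for a long time before escaping a plateau. Second, even with a numerical counterexample in hand, certifying it demands exact arithmetic on the distance-matrix characteristic polynomial and an unambiguous definition of the peak location; a ``counterexample'' that merely exploits a tie-breaking ambiguity in the statement would not be a genuine refutation, so I would insist that $T^{*}$ violate the conjectured equality by a clear integer margin. Once that is secured, generalizing to a family is largely a matter of tracking how $p_A$, $m$, and $p_D$ depend modularly on the number of repeated gadgets.
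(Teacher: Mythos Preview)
Your proposal is correct and follows essentially the same approach as the paper: cast the refutation as maximizing $s_3(T)=\bigl|\,p_A(T)/m(T)-(1-p_D(T)/n)\,\bigr|$ over trees and run AMCS with leaf-addition and subdivision moves until a positive score appears. The only notable deviations are cosmetic --- the paper omits your $-\varepsilon$ shift (taking $s_3(T)>0$ as the stopping criterion directly), starts from a random tree of order~$5$ rather than $15$--$30$, and does not attempt to lift the resulting counterexample to an infinite family for this particular conjecture.
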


The last resolved conjecture relates to the \textit{harmonic} of a graph $G$, which is denoted by $\Hc(G)$ \citep{favaron} and defined by:
\[\Hc(G) = \sum_{uv \in E(G)} \frac{2}{d_u + d_v}.\]

\begin{conjecture}[\citealp{favaron}]\label{conj_4}
For any graph $G$,
\begin{equation}\label{eq_4}
\lambda_2(G) \le \Hc(G).
\end{equation}
\end{conjecture}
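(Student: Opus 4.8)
This is one of the conjectures that the paper sets out to refute, so the ``proof'' I would aim for is a graph $G$ with $\lambda_2(G) > \Hc(G)$, obtained through the search framework set up above. I would first cast refutation as the optimization problem $\max\{s(G) : G \in \G\}$ with the score function $s(G) = \lambda_2(G) - \Hc(G)$, so that any $G$ attaining $s(G) > 0$ is a counterexample. Since the conjecture is asserted for \emph{all} graphs (connectedness is not assumed), I would run the search first over the space of trees (Algorithm~\ref{alg:nmcs_trees}, or its adaptive AMCS version) with a modest vertex budget, escalating to the space of connected graphs (Algorithm~\ref{alg:nmcs_conngraphs}) only if the tree search stalls; each score evaluation costs a single eigendecomposition, i.e.\ $O(n^3)$, which is negligible at the relevant sizes.

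The part that should steer the search is the following observation. The harmonic index stays bounded by a small constant on graphs whose high-degree vertices are adjacent mostly to leaves, since an edge $uv$ contributes $2/(d_u+d_v) \le 1$, and much less when one endpoint has large degree; on the other hand $\lambda_2(G)$ can be forced upward, because by Cauchy interlacing any two vertex-disjoint induced subgraphs $H_1, H_2$ of $G$ give $\lambda_2(G) \ge \min\{\lambda_1(H_1), \lambda_1(H_2)\}$, and a star $S_k$ has $\lambda_1(S_k) = \sqrt{k-1}$ with $\Hc(S_k) < 2$. So the score grows along ``double broom'' configurations --- two large stars joined by a short path --- and I would expect AMCS to converge to such a graph, for instance the balanced double star $D_{k,k}$ (two adjacent centers, each carrying $k$ leaves) for a suitable $k$.

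The main obstacle is twofold. First, the conjecture might be true, in which case no run terminates successfully; the interlacing heuristic above makes me fairly confident it is false, since $\Hc(D_{k,k}) = \tfrac{1}{k+1} + \tfrac{4k}{k+2} < 4$ whereas $\lambda_2(D_{k,k}) \ge \sqrt{k} \to \infty$. Second, the landscape of $s$ is non-monotone --- adding a leaf nudges $\lambda_2$ up slowly but can move $\Hc$ either way --- so naive greedy ascent gets trapped; coping with this is exactly the job of the nesting and the adaptive restarts in AMCS, and it is why the genuinely delicate choice is the search space together with a vertex budget large enough to contain a counterexample yet small enough to be searched within the time limit.

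Finally, to turn the single graph returned by the algorithm into a clean, hand-checkable refutation, I would read off its structure, posit the family $\{D_{k,k}\}_{k \ge k_0}$ (or whatever template the output suggests), and verify $s(D_{k,k}) > 0$ directly: $\Hc(D_{k,k}) = \tfrac{1}{k+1} + \tfrac{4k}{k+2} < 4$, while $\lambda_2(D_{k,k}) \ge \sqrt{k}$ by interlacing against the two vertex-disjoint induced copies of $S_{k+1}$ around the centers, so that $\lambda_2 - \Hc > 0$ already for $k \ge 16$. The only subtlety in this last step is tightening the interlacing estimate (or the bound on $\Hc$) enough that the threshold $k_0$ matches the order of the graph the search actually produces, and confirming the whole family lies inside whichever search space was used.
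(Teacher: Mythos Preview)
Your plan matches the paper's: same score function $s_4(G)=\lambda_2(G)-\Hc(G)$, same restriction to the tree search space, and the same ``two large stars'' heuristic. The paper initializes AMCS at $S_5$ with \textit{treesOnly} set to True and returns a $35$-vertex tree formed by joining the centers of $S_{15}$ and $S_{19}$ to a new middle vertex (so a path of length two between the star centers rather than your adjacent-center double star $D_{k,k}$). The paper stops at this single counterexample; your analytic family is an addition, not a reproduction.

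That extra step, however, has a genuine error. The interlacing principle you invoke --- that vertex-disjoint induced subgraphs $H_1,H_2$ force $\lambda_2(G)\ge\min\{\lambda_1(H_1),\lambda_1(H_2)\}$ --- requires the induced subgraph on $V(H_1)\cup V(H_2)$ to be $H_1\sqcup H_2$, i.e.\ no edges of $G$ running between the two parts. In $D_{k,k}$ the center--center edge violates this, and indeed the exact value is $\lambda_2(D_{k,k})=\tfrac{1}{2}\bigl(\sqrt{4k+1}-1\bigr)<\sqrt{k}$. Your threshold is therefore off: at $k=16$ one has $\lambda_2\approx 3.531<\Hc\approx 3.614$, so $D_{16,16}$ is \emph{not} a counterexample; the first one is $D_{17,17}$. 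The easy fix is either to compute $\lambda_2(D_{k,k})$ directly from the quadratic $\lambda^2-\lambda-k=0$, or to switch to the paper's template with a middle vertex, where the two induced stars genuinely have no edge between them and your interlacing argument then goes through as written.
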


\subsection{Open conjectures}

The first two open conjectures by \citet{liu} discuss bounds on a modified version of the second Zagreb index $M_2(G)$ known as the \textit{modified second Zagreb index} $\prescript{m}{}M_2(G)$. This modification was introduced by \citet{nikolic}. The index $\prescript{m}{}M_2(G)$ is a special case of the generalized Randi\'c index when $\alpha = -1$, namely $\prescript{m}{}M_2(G) = R_{-1}(G)$. Hence
\[\prescript{m}{}M_2(G) = \sum_{uv \in E(G)} \frac{1}{d_u d_v}.\]

\begin{conjecture}[\citealp{liu}]\label{conj_5}
Let $T$ be a tree on $n$ vertices. Then
\begin{equation}\label{eq_5}
\prescript{m}{}M_2(T) \le \frac{n + 1}{4}.
\end{equation}
\end{conjecture}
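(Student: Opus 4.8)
The plan is to proceed by induction on the order $n$, in the spirit of Algorithm~\ref{alg:nmcs_trees}: every tree grows out of $P_2$ by repeatedly adding a leaf or subdividing an edge, so it suffices to control how $\prescript{m}{}M_2$ changes under such a step. First I would fix a leaf $\ell$ with unique neighbour $v$, set $T' = T - \ell$, and record the exact increment. If $v$ has degree $d \ge 2$ in $T$ with remaining neighbours $u_1,\dots,u_{d-1}$, a one-line computation gives
\[
\prescript{m}{}M_2(T) - \prescript{m}{}M_2(T') \;=\; \frac{1}{d} \;-\; \frac{1}{d(d-1)}\sum_{i=1}^{d-1}\frac{1}{d_{u_i}}.
\]
Since $\frac{n+1}{4} - \frac{n}{4} = \frac14$, the induction would go through provided this increment is always at most $\frac14$, with $P_3$ (index $1 = \frac{3+1}{4}$) as the base case.

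To keep the increment under control, the idea is to take $\ell$ to be an endpoint of a longest path of $T$, so that each surviving neighbour $u_i$ of $v$ is either a leaf or an internal path vertex. When $d = 2$ the increment equals $\frac12\bigl(1 - \frac{1}{d_{u_1}}\bigr)$, which is $0$ when $u_1$ is a leaf and exactly $\frac14$ when $d_{u_1} = 2$ --- but it is $\frac13 > \frac14$ as soon as $d_{u_1} = 3$. So a one-leaf-at-a-time induction is too crude, and the repair would be to delete a whole pendant path of length two at once (equivalently, a pendant path of length two together with the bookkeeping at the hub it hangs from), reducing $n$ by two at a time against an allowed increment of $\frac12$ and splitting into cases on the degree of the hub and of its other neighbours, with a handful of small base cases.

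The obstacle --- and the reason I expect this plan to fail --- is exactly that hub bookkeeping, because the extremal tree is \emph{not} $P_n$: a degree-$3$ vertex can absorb pendant paths of length two more cheaply than an internal path vertex can, since the edge from such a hub to the midpoint of a pendant path still contributes a sizeable $\frac{1}{2\cdot 3} = \frac16$. Already $P_2$ breaks the stated bound ($1 > \frac34$), so the conjecture needs $n \ge 3$ even to have a chance; and joining two vertices $u,v$ by an edge and attaching two pendant paths of length two to each of them (so $d_u = d_v = 3$ and $n = 10$) gives $\prescript{m}{}M_2 = \frac19 + 4\cdot\frac16 + 4\cdot\frac12 = \frac{25}{9} > \frac{11}{4}$. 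More generally, attaching $k$ pendant paths of length two to each of two adjacent vertices ($n = 4k+2$, each hub of degree $k+1$) yields $\prescript{m}{}M_2 = (k+1) - \frac{k}{(k+1)^2}$, which exceeds $\frac{n+1}{4} = k + \frac34$ precisely when $(k-1)^2 > 0$, i.e.\ for every $k \ge 2$. So the realistic outcome here is not a proof but a refutation: let AMCS search the space of trees, confirm a small violator such as the one on $10$ vertices, and then read off this infinite family.
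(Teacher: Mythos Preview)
Your proposal reaches the right conclusion: Conjecture~\ref{conj_5} is false, and your $10$-vertex tree (two adjacent degree-$3$ hubs, each carrying two pendant paths of length two) is exactly the counterexample $T_1(2)$ that the paper's AMCS search produces. The inductive sketch that precedes it is correctly abandoned, and the diagnosis --- that a degree-$3$ hub absorbs pendant $P_2$'s too cheaply --- is precisely what drives the refutation.

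Where your generalisation diverges from the paper's is in which parameter you let grow. You keep two adjacent hubs and increase the number $k$ of pendant $P_2$'s on each, giving $n=4k+2$ and excess $s_5 = \tfrac14 - \tfrac{k}{(k+1)^2}$, which is positive for every $k\ge 2$ but bounded above by $\tfrac14$. The paper instead defines $T_1(k)$ as a path of $k$ hubs $w_1,\dots,w_k$, each carrying exactly two pendant $P_2$'s (so $n=5k$, with the interior hubs of degree $4$); a direct count gives $\prescript{m}{}M_2(T_1(k)) = \tfrac{21k}{16}+\tfrac{7}{48}$ for $k\ge 3$, hence $s_5(T_1(k)) = \tfrac{k}{16}-\tfrac{5}{48}\to\infty$ (Theorem~\ref{thm_counterex1}). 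Both families refute the conjecture and share the same smallest member, but the paper's choice buys the stronger statement that the bound $\tfrac{n+1}{4}$ cannot be repaired by any additive constant, whereas your family only rules out additive corrections below $\tfrac14$.
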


A \textit{dominating set} in a graph $G$ is a set $S \subseteq V(G)$ of vertices such that every vertex $v \in V(G)$ is either an element of $S$ or adjacent to an element of $S$ \citep{haynes}. The \textit{domination number} $\gamma(G)$ of $G$ is the minimum possible size of a dominating set in $G$.

\begin{conjecture}[\citealp{liu}]\label{conj_6}
Let $T$ be a tree on $n$ vertices with domination number $\gamma$. Then
\begin{equation}\label{eq_6}
\prescript{m}{}M_2(T) \ge - \frac{\gamma(T) - 1}{2 (n - \gamma(T))} + \frac{\gamma(T) + 1}{2}.
\end{equation}
\end{conjecture}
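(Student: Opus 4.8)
The plan is to recast Conjecture~\ref{conj_6} as the extremal statement that, among all trees on $n$ vertices with domination number $\gamma$, the quantity $\prescript{m}{}M_2$ is minimized by the spider $\Sigma_{n,\gamma}$ formed from a single center $c$ by attaching $\gamma-1$ legs of length $2$ and $n-2\gamma+1$ legs of length $1$ (this is well defined since $n\ge 2\gamma$ for every tree, and $\gamma(\Sigma_{n,\gamma})=\gamma$, a minimum dominating set being $c$ together with the midpoints of the long legs). Since $\deg(c)=n-\gamma$, a direct count gives
\[
\prescript{m}{}M_2(\Sigma_{n,\gamma})=\frac{\gamma-1}{2(n-\gamma)}+\frac{n-2\gamma+1}{n-\gamma}+\frac{\gamma-1}{2}=\frac{(\gamma+1)(n-\gamma-1)+2}{2(n-\gamma)},
\]
which is precisely the right-hand side of~\eqref{eq_6}. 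So the inequality is equivalent to the claim that $\Sigma_{n,\gamma}$ is a minimizer; as a first, cheap sanity check I would tabulate $\prescript{m}{}M_2$ for the obvious competitors --- double brooms, caterpillars, and spiders with nonstandard leg lengths --- across a range of $(n,\gamma)$, since any one of them beating the bound would settle the question in the refutation direction.

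Assuming the bound survives that check, the main line of attack is a reduction by local, $\gamma$-preserving, $n$-preserving graph operations that do not increase $\prescript{m}{}M_2$, driving an arbitrary tree toward $\Sigma_{n,\gamma}$: relocating a pendant leaf toward a vertex of larger degree, shortening any leg of length $\ge 3$, and coalescing the branch systems of two high-degree vertices. The effect of each such move on $\prescript{m}{}M_2$ is read off from the elementary identity
\[
\prescript{m}{}M_2(T)-\prescript{m}{}M_2(T-\ell)=\frac{1}{d_s}\left(1-\frac{1}{d_s-1}\sum_{u\sim s,\ u\ne\ell}\frac{1}{d_u}\right),
\]
valid for a leaf $\ell$ with support $s$ of degree $d_s\ge 2$, whose bracket is nonnegative because it equals the average of the $d_s-1$ quantities $1-1/d_u\ge 0$; in particular, adding a leaf never decreases $\prescript{m}{}M_2$. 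The same computation governs edge subdivision and its inverse. An alternative route is induction on $n$ by deleting a carefully chosen leaf $\ell$, using that $\gamma(T-\ell)\in\{\gamma-1,\gamma\}$ and arranging, in each case, for the gain in $\prescript{m}{}M_2$ upon restoring $\ell$ to exceed the corresponding increment of the target function.

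The hard part will be the case analysis that certifies $\Sigma_{n,\gamma}$, and nothing else, as the minimizer. Near the conjectured extremal shape the displayed increments are of order $n^{-2}$ or smaller, so the inequality is tight there and the argument has essentially no slack; the genuinely delicate configurations are the ``double brooms'' and the spiders carrying two or more long legs shared among competing high-degree vertices, and showing that fusing all of this into a single center never increases $\prescript{m}{}M_2$ while holding $\gamma$ fixed is the crux. If a clean exchange lemma for that step cannot be found, the likely explanation is that one of these near-extremal trees in fact violates the bound, in which case the effort should pivot to exhibiting such a tree together with an accompanying infinite family --- precisely the kind of object a search procedure like AMCS is designed to uncover.
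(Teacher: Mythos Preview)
Your proposal is aimed at \emph{proving} the inequality, but the conjecture is false, and the paper's contribution for this statement is a refutation, not a proof. The paper first finds the $8$-vertex tree $T_2(2)$ by search (score $s_6=\tfrac{17}{8}-\tfrac{19}{9}=\tfrac{1}{72}$), and then generalizes it to the family $T_2(k)$: a path $w_1\cdots w_k$ where each $w_i$ carries a pendant leaf $x_i$ and a pendant path $w_iv_iu_i$ of length $2$. For $k\ge 3$ one has $n=4k$, $\gamma=2k$, $\prescript{m}{}M_2(T_2(k))=\tfrac{15k}{16}+\tfrac{11}{48}$, while the right-hand side of \eqref{eq_6} equals $k+\tfrac{1}{4k}$; hence $s_6(T_2(k))=\tfrac{k}{16}+\tfrac{1}{4k}-\tfrac{11}{48}\to\infty$.

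Consequently, your main line of attack --- the $\gamma$-preserving, $\prescript{m}{}M_2$-nonincreasing reduction of an arbitrary tree to the spider $\Sigma_{n,\gamma}$ --- cannot be carried through: at least one of your proposed local moves must \emph{increase} $\prescript{m}{}M_2$ on some instance, or must fail to preserve $\gamma$. Indeed, $T_2(k)$ and $\Sigma_{4k,2k}$ have the same $(n,\gamma)$, yet $\prescript{m}{}M_2(T_2(k))<\prescript{m}{}M_2(\Sigma_{4k,2k})$, so the spider is not the minimizer. Your identification of $\Sigma_{n,\gamma}$ as the \emph{equality case} of the conjectured bound is correct, and your leaf-increment identity is valid, but neither salvages the plan. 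To your credit, the ``cheap sanity check'' you propose would have surfaced the issue immediately: $T_2(2)$ is an $8$-vertex caterpillar-type tree with $\gamma=n/2$, sitting squarely in the competitor classes you list, and it already beats the bound. The fallback you mention in your final paragraph --- pivoting to an explicit infinite family once the exchange lemma resists proof --- is exactly what the paper does.
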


The last four conjectures were conjectures left open by the computer system AutoGraphiX. To the best of our knowledge, these conjectures have yet to be refuted since their initial conception in 2006.

\begin{conjecture}[\citealp{aouchiche2006}]\label{conj_7}
Let $G$ be a connected graph on $n \ge 3$ vertices. Then
\begin{equation}\label{eq_7}
\lambda_1(G) \cdot \pi(G) \le n - 1.
\end{equation}
\end{conjecture}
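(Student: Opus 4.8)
The plan is to read inequality \eqref{eq_7} as an extremal-graph statement and to argue that $K_n$ maximizes the product $\lambda_1(G)\pi(G)$ over connected graphs on $n$ vertices. Equality in \eqref{eq_7} holds for $G=K_n$, where $\lambda_1(G)=n-1$ and $\pi(G)=1$, which makes $K_n$ the natural candidate for the maximizer, and a proof of this maximization claim would immediately give \eqref{eq_7}. Two standard facts would be the main tools: $\lambda_1(G)\le\Delta(G)$, where $\Delta(G)$ is the maximum degree, with equality for connected $G$ iff $G$ is regular; and adding an edge to a connected graph strictly increases $\lambda_1$ (by Perron--Frobenius) but can only shorten distances, hence can only decrease the transmission $\sum_{v}d(u,v)$ of each vertex $u$, hence can only decrease $\pi(G)$.

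First I would dispose of the small-diameter case, which is clean. Suppose $D(G)\le 2$ and pick a vertex $v$ of maximum degree $\Delta=\Delta(G)$; then every other vertex is at distance $1$ from $v$ (there are $\Delta$ of these) or at distance $2$ (there are $n-1-\Delta$ of these), so the transmission of $v$ equals $2(n-1)-\Delta$ and therefore $\pi(G)\le 2-\Delta/(n-1)$. Together with $\lambda_1(G)\le\Delta$ this yields $\lambda_1(G)\pi(G)\le\Delta\bigl(2-\Delta/(n-1)\bigr)$, and the right-hand side, viewed as a function of $\Delta\in[1,n-1]$, is concave and maximized at the endpoint $\Delta=n-1$ with value $n-1$; equality forces a dominating vertex and regularity, i.e.\ $G=K_n$. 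For graphs of larger diameter, the intended continuation is a local-move argument: pick an extremal $G^*\ne K_n$, and exhibit a non-edge $uv$ whose insertion raises $\lambda_1$ by at least $2x_ux_v/\lVert x\rVert^2$ (with $x$ a Perron vector of $G^*$), by \emph{more} than the proximity drops, contradicting extremality.

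I expect that last trade-off to be the obstacle, and a fatal one. The relative gain in $\lambda_1$ from inserting a single edge can be made arbitrarily small, while the loss in $\pi$ from deleting edges can be of linear order, so $\lambda_1(G)\pi(G)$ is far from monotone under edge addition and $K_n$ need not be the maximizer. Concretely, consider the lollipop obtained from $K_k$ by joining one of its vertices to the end of a path on the remaining $n-k$ vertices: eigenvalue interlacing gives $\lambda_1\ge k-1$, while every vertex has transmission $\Omega((n-k)^2)$ because of the pendant path, so $\pi=\Omega((n-k)^2/n)$ and hence $\lambda_1(G)\pi(G)=\Omega(k(n-k)^2/n)$, which is $\Omega(n^2)$ once $k$ is a fixed fraction of $n$. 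This exceeds $n-1$ already for moderately large $n$, so the extremal-graph plan cannot be carried through; the honest conclusion is that \eqref{eq_7} is false for all sufficiently large $n$, with lollipops of this shape forming an infinite family of counterexamples. If one instead wanted a correct statement, the remaining work would be to pin down the true growth rate of $\max_G\lambda_1(G)\pi(G)$, which the estimate above suggests is $\Theta(n^2)$, together with the structure of the genuine extremal graphs.
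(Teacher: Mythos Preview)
Your proposal reaches the right conclusion—Conjecture~\ref{conj_7} is false—and your counterexample family is precisely the one the paper lands on. The paper's refutation is purely computational: AMCS outputs a single explicit witness, the $12$-vertex lollipop formed by joining a vertex of $K_5$ to an endpoint of a path $P_7$ (Figure~\ref{fig:counterex_7}), with score $s_7\approx 0.05923$. You instead argue analytically that the lollipop $K_k$ plus a pendant path on $n-k$ vertices satisfies $\lambda_1(G)\pi(G)=\Omega\bigl(k(n-k)^2/n\bigr)$, hence $\Omega(n^2)$ once $k$ is a fixed fraction of $n$; the paper's graph is exactly the instance $k=5$, $n=12$ of your family. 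Both ingredients of your estimate are sound: interlacing against the induced $K_k$ gives $\lambda_1\ge k-1$, and since the radius of the lollipop is at least $(n-k)/2$, every vertex has transmission at least of order $(n-k)^2$, so $\pi=\Omega((n-k)^2/n)$.

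What your route buys is a \emph{strong} refutation—an infinite family with scores tending to infinity—of the kind the paper supplies for Conjectures~\ref{conj_5}, \ref{conj_6}, \ref{conj_9}, \ref{conj_10} (Theorems~\ref{thm_counterex1}--\ref{thm_counterex4}) but does \emph{not} supply for Conjecture~\ref{conj_7}; your diameter-$\le 2$ positive result is also absent from the paper and is a clean complement. What the paper's route buys is a concrete small witness: your asymptotic bound does not by itself locate the smallest $n$ for which a lollipop violates \eqref{eq_7}, whereas the search delivers one on $12$ vertices directly.
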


The following conjecture requires the second smallest eigenvalue of the Laplacian matrix of $G$, which is denoted by $a(G)$ and known as the \textit{algebraic connectivity of $G$}.

\begin{conjecture}[\citealp{aouchiche2006}]\label{conj_8}
Let $G$ be a connected graph on $n \ge 3$ vertices. Then
\begin{equation}\label{eq_8}
a(G) \cdot \pi(G) \ge \begin{cases}
			\frac{n ^ 2}{2(n - 1)} \left(1 - \cos\frac{\pi}{n}\right) & \text{if $n$ is even},\\
            \frac{n + 1}{2} \left(1 - \cos\frac{\pi}{n}\right) & \text{otherwise}.
		 \end{cases}
\end{equation}
\end{conjecture}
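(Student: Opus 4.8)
The plan is to begin from the observation that the claimed bound is attained by the path. The Laplacian $L$ of $P_n$ has eigenvalues $2(1-\cos(k\pi/n))$ for $k=0,1,\dots,n-1$, so $a(P_n)=2(1-\cos(\pi/n))$; and $\pi(P_n)$, which is attained at a central vertex of $P_n$, equals $(n+1)/4$ when $n$ is odd and $n^2/(4(n-1))$ when $n$ is even. Multiplying these, $a(P_n)\,\pi(P_n)$ is exactly the right-hand side of \eqref{eq_8} in each parity, so Conjecture \ref{conj_8} is equivalent to the extremal assertion that $P_n$ minimizes the product $a(G)\,\pi(G)$ over all connected graphs on $n$ vertices. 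Any proof should be organized around that extremality.

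First I would try to reduce the problem to trees. If $G$ contains a cycle, delete an edge of that cycle so that $G$ stays connected; but a difficulty already surfaces here, since edge deletion can only weakly decrease $a(G)$ (the Laplacian loses a positive-semidefinite rank-one summand), whereas it can only weakly increase $\pi(G)$ (no distance can shrink), so the product need not move monotonically under the reduction. I would either look for a deleted edge — say one on a shortest cycle, or incident to a vertex of large degree — whose removal costs more in $a$ than it gains in $\pi$, or else argue that any cyclic minimizer can be converted to a tree without raising $a(G)\,\pi(G)$.

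Granting a reduction to trees, the next task is to pin down the extremal tree. It is classical — take a spanning tree and note that adding edges only raises algebraic connectivity — that $P_n$ minimizes $a(G)$ among connected $n$-vertex graphs, which tempts one to combine $a(G)\ge 2(1-\cos(\pi/n))$ with a lower bound on $\pi(G)$; but $\pi(G)\ge 1$ is far too crude, losing a factor of order $n$ against the target, so the two factors must be coupled. The coupling I would attempt routes through the diameter $D=D(G)$: a lower bound on $\pi(G)$ that grows with $D$ (for $P_n$ one has $\pi(P_n)=(n+1)/4\approx(D+1)/4$), together with a lower bound of the form $a(G)\ge 2(1-\cos(\pi/(D+1)))$, i.e.\ the claim that a graph of diameter $D$ cannot have algebraic connectivity below that of the path $P_{D+1}$. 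Since $t\mapsto t(1-\cos(\pi/t))$ is decreasing for $t\ge 2$ and $D+1\le n$, multiplying the two bounds and substituting would give $a(G)\,\pi(G)\ge\tfrac12\,n(1-\cos(\pi/n))$, after which a finer endpoint analysis would be needed to sharpen the leading factor $n$ to the $n+1$ or $n^2/(n-1)$ appearing in \eqref{eq_8}.

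The step I expect to be the genuine obstacle — and which I suspect is in fact false — is the diameter-based lower bound on $a(G)$. Consider a ``barbell'' graph: two copies of $K_t$ joined by a short path. Its diameter stays bounded, yet the joining path is a bottleneck and each clique acts like a heavy point mass, so $a(G)$ is driven down to order $1/(tD)$, far below the value for $P_{D+1}$. Simultaneously the cliques pull the minimum average distance down, keeping $\pi(G)$ only of order $D$, and a back-of-the-envelope estimate then gives $a(G)\,\pi(G)=\Theta(1/n)$ with a constant strictly below $\pi^2/4$ once the clique size and bridge length are tuned — that is, below the asymptotic value $\pi^2/(4n)$ of the right-hand side of \eqref{eq_8}. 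I would therefore expect the coupling strategy to break precisely here, and would pivot to treating the barbell family as the natural candidate for refutation, adjusting its two parameters to force $a(G)\,\pi(G)$ beneath the claimed threshold; locating and tuning such a construction is exactly the task the search algorithm of this paper is built for.
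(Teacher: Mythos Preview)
This statement is not proved in the paper: Conjecture~\ref{conj_8} is one of the open conjectures that the paper \emph{refutes}. The paper's treatment consists solely of running AMCS and exhibiting a single explicit counterexample (Figure~\ref{fig:counterex_8}) with score $s_8\approx 0.00037$. There is no analytical argument, no reduction to trees, no diameter coupling, and no parametric family constructed for this conjecture (in contrast to Conjectures~\ref{conj_5}, \ref{conj_6}, \ref{conj_9}, \ref{conj_10}, for which Theorems~\ref{thm_counterex1}--\ref{thm_counterex4} are proved).

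Your proposal is therefore not comparable to a proof in the paper, because there is none. The first two-thirds of what you wrote --- reduction to trees, the diameter-based coupling $a(G)\ge 2(1-\cos(\pi/(D+1)))$ --- is an attempt to \emph{establish} the inequality, and you correctly diagnose that this route collapses: edge deletion moves $a$ and $\pi$ in opposite directions, and the bound $a(G)\ge a(P_{D+1})$ is false in general. That part of the write-up is a dead end and should be cut or compressed to a single sentence noting that a direct proof is obstructed.

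Where your proposal lands --- the barbell heuristic --- is in the right spirit and reaches the right qualitative conclusion (the conjecture is false), but it is not a refutation. Your estimates $a(G)=\Theta(1/(tD))$ and $\pi(G)=\Theta(D)$ are only order-of-magnitude, and the claim that the resulting constant drops strictly below $\pi^2/4$ is asserted, not checked; given that the paper's counterexample beats the bound by only about $3.7\times 10^{-4}$, constants matter here and a back-of-the-envelope argument cannot close the gap. To turn this into an actual refutation you would need either a concrete barbell with verified numerics or a rigorous asymptotic computation of $a$ and $\pi$ with explicit leading constants. The paper sidesteps all of this by letting AMCS do the search, with no structural ansatz imposed.
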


An \textit{independent set} $I$ in a graph $G$ is a set of vertices of which no two are adjacent \citep{bondy}. This independent set $I$ is \textit{maximum} if $G$ does not contain a larger independent set. The number of vertices in a maximum independent set is called the \textit{independence number of $G$} and is denoted by $\alpha(G)$.

\begin{conjecture}[\citealp{aouchiche2006}]\label{conj_9}
Let $G$ be a connected graph on $n \ge 3$ vertices. Then
\begin{equation}\label{eq_9}
\lambda_1(G) - \alpha(G) \ge \sqrt{n - 1} - n + 1.
\end{equation}
\end{conjecture}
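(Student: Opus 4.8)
The plan is to treat \eqref{eq_9} first as a genuine extremal inequality. Rewriting it as $\lambda_1(G)\ge\sqrt{n-1}-\bigl(n-1-\alpha(G)\bigr)$ and recalling that $n-\alpha(G)$ equals the vertex cover number $\tau(G)$, the conjecture asserts $\lambda_1(G)\ge\sqrt{n-1}-\tau(G)+1$ for every connected graph $G$ on $n$ vertices. I would first pin down the equality cases. The star $S_n$ has $\lambda_1(S_n)=\sqrt{n-1}$ and $\alpha(S_n)=n-1$, so it attains equality for all $n$, and it is the unique connected graph with $\alpha=n-1$: only one vertex then lies outside a maximum independent set, and by connectedness that vertex is adjacent to all the others. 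So the conjecture really claims that $S_n$ uniquely minimizes $\lambda_1-\alpha$ among connected graphs, and any proof would have to show that no graph with $\alpha\le n-2$ has spectral radius small enough to undercut the star.

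The decisive step is to stress-test that last claim against the sparsest graphs with near-maximum independence number, and this is where the conjecture fails. Take the balanced double star $D_k$: two adjacent centers, each carrying $k$ pendant leaves, so $n=2k+2$. The partition into leaves and centers is equitable with quotient matrix $\left(\begin{smallmatrix}0&1\\k&1\end{smallmatrix}\right)$, and since the equitable quotient of a connected graph carries that graph's spectral radius as its Perron root, $\lambda_1(D_k)=\tfrac12\bigl(1+\sqrt{1+4k}\bigr)=\tfrac12\bigl(1+\sqrt{2n-3}\bigr)$, while $\alpha(D_k)=n-2$ because the two centers form a vertex cover. Comparing $\lambda_1(D_k)-\alpha(D_k)$ with $\sqrt{n-1}-n+1$ reduces to the elementary inequality $\tfrac12\bigl(1+\sqrt{2n-3}\bigr)+1<\sqrt{n-1}$, which after clearing denominators and squaring twice is equivalent to $n^2-28n+52>0$ and so holds for all $n\ge 27$. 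Hence the balanced double star violates \eqref{eq_9} for every even $n\ge 28$, so Conjecture~\ref{conj_9} is false; in practice I would expect AMCS to surface exactly such a double star (or a broom) as the seed counterexample, which the computation above then promotes to an infinite family.

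To turn this into a clean theorem I would (i) state the counterexample family explicitly and check the two nested square-root manipulations carefully, keeping both sides nonnegative so that each squaring is reversible --- the borderline $n=26$, where $D_{12}$ satisfies \eqref{eq_9} with equality ($\lambda_1=4$, $\alpha=24$), is a convenient guard against arithmetic slips; and (ii) dispatch odd $n$ with the near-balanced double star $D_{k,k+1}$ on $n=2k+3$ vertices, whose Perron value comes from the equitable $4$-cell partition into the two leaf classes and the two centers and satisfies $\lambda_1(D_{k,k+1})^2=(k+1)+\sqrt{k+1}$, again with $\alpha=n-2$; the same comparison shows \eqref{eq_9} fails for all odd $n\ge 27$. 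There is no obstacle of principle here: once one stops trying to prove the inequality and instead probes trees with $\tau=2$, the refutation is essentially forced, and the only real work is the bookkeeping around the parity cases and the double squaring. If a wider margin of violation were wanted, I would note --- without pursuing it in the main argument --- that the double star is not the spectrally cheapest tree with near-maximum independence number: a center joined to $\ell$ disjoint $k$-leaf stars has $\lambda_1=\sqrt{k+\ell}$ and $\alpha=n-\ell$, so letting $\ell$ grow slowly with $n$ widens the gap $\alpha-\lambda_1$, and handling such deeper trees needs only the same equitable-partition computation one level down.
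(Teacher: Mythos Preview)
Your refutation is correct, but it uses a different family from the paper's. The paper's AMCS counterexample is not a double star but the tree $T(2,b)$: a new vertex $v$ joined to the centers of two copies of $S_b$, with $n=2b+1$, $\lambda_1=\sqrt{b+1}$, and $\alpha=2b-1$. This gives $s_9(T(2,b))=\sqrt{2b}-\sqrt{b+1}-1$, and the paper simply observes that $\sqrt{2b}-\sqrt{b+1}-1\ge(\sqrt2-1)\sqrt{b}-2\to\infty$. That argument is shorter than yours: the spectral radius is a clean square root, there is no parity split, and one avoids the double-squaring bookkeeping and the threshold analysis around $n=26$. Amusingly, your closing side remark --- ``a center joined to $\ell$ disjoint $k$-leaf stars has $\lambda_1=\sqrt{k+\ell}$ and $\alpha=n-\ell$'' --- is exactly the paper's construction at $\ell=2$; had you promoted that observation to the main argument, you would have reproduced the paper's proof verbatim. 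What your approach buys in exchange for the extra work is an explicit sharp threshold (the equality at $n=26$ for $D_{12}$ is a nice touch) and coverage of both parities, neither of which the paper addresses.
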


The final open AutoGraphiX conjecture concerns the Randi\'c index $R(G)$ of $G$.

\begin{conjecture}[\citealp{aouchiche2006}]\label{conj_10}
Let $G$ be a connected graph on $n \ge 3$ vertices. Then
\begin{equation}\label{eq_10}
R(G) + \alpha(G) \le n - 1 + \sqrt{n - 1}.
\end{equation}
\end{conjecture}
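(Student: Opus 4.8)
The plan is to begin as one would for any extremal-type inequality: pin down the graph that is supposed to attain equality. A direct computation on the star $S_n$ gives $\alpha(S_n)=n-1$ (its $n-1$ leaves form an independent set) and
\[
R(S_n)=(n-1)\cdot\frac{1}{\sqrt{(n-1)\cdot 1}}=\sqrt{n-1},
\]
so that $R(S_n)+\alpha(S_n)=n-1+\sqrt{n-1}$ holds with equality. Thus the star is the presumptive maximizer, and a proof would have to establish that $R(G)+\alpha(G)$ is maximized over all connected graphs of order $n$ by $S_n$.

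The next step is the cheap upper bound. For a connected graph on $n\ge 2$ vertices one has $\alpha(G)\le n-1$, and it is a classical fact that $R(G)\le n/2$ for graphs without isolated vertices. Summing these gives $R(G)+\alpha(G)\le \tfrac{n}{2}+n-1$, which already exceeds $n-1+\sqrt{n-1}$ for every $n\ge 3$, since $n/2>\sqrt{n-1}$. So the naive decoupling of the two invariants is far too lossy, and because $R$ and $\alpha$ are each made large only by graphs of very different shape, this is the first sign that the star is only a local rather than a global maximizer, and hence that the conjecture may well be false.

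Given the refutation-oriented setup of this paper, the productive route is therefore to run AMCS over connected graphs with the score function $s(G)=R(G)+\alpha(G)-(n-1+\sqrt{n-1})$ and inspect what it returns for moderately large $n$. Guided by the star being extremal, the obvious hand-checkable candidate is the complete bipartite graph $K_{2,n-2}$: here $\alpha(K_{2,n-2})=n-2$ and $R(K_{2,n-2})=2(n-2)\cdot(2(n-2))^{-1/2}=\sqrt{2(n-2)}$, so $R+\alpha=n-2+\sqrt{2(n-2)}$. The bound is violated exactly when $\sqrt{2(n-2)}>1+\sqrt{n-1}$, which after squaring twice reduces to $n^2-12n+20>0$, i.e. $n\ge 11$; in fact the larger complete bipartite graphs $K_{k,n-k}$ with $k\approx(2-\sqrt2)n/4$ do even better, so the violation grows like $\Theta(\sqrt n)$.

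The main obstacle is thus conceptual rather than computational: one must resist the temptation to believe the bound merely because the star meets it, and instead notice that trading a single unit of $\alpha$ (replacing the part of size $1$ in $S_n$ by a part of size $2$) buys roughly $(\sqrt2-1)\sqrt{n}$ in $R$ for large $n$. I expect the outcome to be a strong refutation: the inequality fails for all $n\ge 11$, with $\{K_{2,n-2}\}_{n\ge 11}$ an explicit infinite family of counterexamples.
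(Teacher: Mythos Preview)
Your refutation is correct: $K_{2,n-2}$ has $\alpha=n-2$ and $R=\sqrt{2(n-2)}$, and the inequality $\sqrt{2(n-2)}>1+\sqrt{n-1}$ indeed reduces to $n^2-12n+20>0$, giving counterexamples for all $n\ge 11$. So the conjecture is strongly refuted, as you conclude.

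Your route differs from the paper's. The paper lets AMCS search first; the counterexample it returns is the tree $T(2,b)$ obtained by attaching the centres of two stars $S_b$ to a new middle vertex (a ``double broom''), with $n=2b+1$, $\alpha=2b-1$, $\lambda_1=\sqrt{b+1}$, and $R=(2b-2+\sqrt{2})/\sqrt{b}$. The paper then proves $s_{10}(T(2,b))\to\infty$ as $b\to\infty$, with growth $(2-\sqrt{2})\sqrt{b}\sim(\sqrt{2}-1)\sqrt{n}$. Your $K_{2,n-2}$ family achieves the \emph{same} leading term $(\sqrt{2}-1)\sqrt{n}$ and, pleasantly, the same threshold $n=11$ (the paper's smallest counterexample is $T(2,5)$, also on $11$ vertices). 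What your approach buys is that it is entirely by hand---no search algorithm is needed, and the intuition ``trade one unit of $\alpha$ for a factor $\sqrt{2}$ in $R$'' is transparent. What the paper's approach buys is that its counterexamples are \emph{trees}, so the conjecture fails even in that restricted class; moreover the same family $T(2,b)$ simultaneously refutes Conjecture~\ref{conj_9}, which your bipartite family cannot do (since $\lambda_1(K_{2,n-2})=\sqrt{2(n-2)}$ is too large).

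One small slip: for the optimally balanced $K_{k,n-k}$ with $k\approx(2-\sqrt{2})n/4$ the violation actually grows like $\tfrac{\sqrt{2}-1}{2}\,n=\Theta(n)$, not $\Theta(\sqrt{n})$; your $\Theta(\sqrt{n})$ is the rate for fixed $k=2$ only.
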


\section{Proposed algorithm: adaptive Monte Carlo search}\label{sub:amcs}

The conjecture-refuting algorithm proposed in this study is a novel modification of NMCS referred to as the \textit{adaptive Monte Carlo search} (AMCS). One limitation of NMCS is that its search is restricted to graphs of a certain size. For example, if $G$ has $m$ edges, then NMCS consistently checks graphs $G'$ of size $m + \textit{level} + \textit{depth}$. If NMCS does not find a more optimal graph $G'$ such that $s(G') > s(G)$, the algorithm will be stuck at searching graphs of size $m + \textit{level} + \textit{depth}$. This is not an ideal scenario since the algorithm will be unable to explore a large variety of graphs when it is stuck at a local maximum. As such, NMCS often stops progressing when it cannot find any better graph than the current one.

\begin{algorithm}
\caption{Adaptive Monte Carlo search (AMCS).}\label{alg:amcs}
\begin{algorithmic}[1]
\Require{A score function $s\colon \G \to \R$ to be maximized.}
\Function{AMCS}{$G$, $\textit{maxDepth}$, $\textit{maxLevel}$, $\textit{treesOnly}$}
    \If{$\textit{treesOnly} = \textbf{true}$}
        \State $\algoname{NMCS} \gets \algoname{NMCSTrees}$;
    \Else
        \State $\algoname{NMCS} \gets \algoname{NMCSConnectedGraphs}$;
    \EndIf
    \State $\textit{depth} \gets 0$; $\textit{level} \gets 1$; $\textit{minOrder} \gets \lvert V(G) \rvert$;
    \While{$s(G) \le 0$ \AND $\textit{level} \le \textit{maxLevel}$}
        \State $G' \gets G$;
        \While{$\lvert V(G') \rvert > \textit{minOrder}$}
            \State $\textit{randReal} \gets$ random real number between $0$ and $1$;
            \If{$\textit{randReal} < \textit{depth} / (\textit{depth} + 1)$}
                \State Remove a random leaf or subdivision from $G'$;
            \Else
                \State Break the while loop;
            \EndIf
        \EndWhile
    \State $G' \gets$ \Call{NMCS}{$G'$, $\textit{depth}$, $\textit{level}$};
    \If{$s(G') > s(G)$}
        \State $G \gets G'$; $\textit{depth} \gets 0$; $\textit{level} \gets 1$;
    \ElsIf{$\textit{depth} < \textit{maxDepth}$}
        \State $\textit{depth} \gets \textit{depth} + 1$;
    \Else
        \State $\textit{depth} \gets 0$; $\textit{level} \gets \textit{level} + 1$;
    \EndIf
    \EndWhile
    \State \Return $G$;
\EndFunction
\end{algorithmic}
\end{algorithm}

To address this issue, AMCS tries to adapt to the situation by increasing the \textit{depth} and \textit{level} parameters if it detects that it is stuck at a local maximum. Instead of having \textit{depth} and \textit{level} as preset parameters, AMCS increments the values of both parameters if a more optimal graph is not found during the last iteration. Consequently, AMCS attempts to slightly adjust the current best graph to steadily increase its score, before adjusting the graph in a more dramatic fashion if a better graph is not found. This concept is based on the idea of neighborhood change exploited by variable neighborhood search. As in variable neighborhood search, the AMCS algorithm continually changes the search depth so that it may adapt to the current difficulty of the search problem. In other words, when the exploration is stuck at a local maximum, AMCS increases the \textit{depth} and \textit{level} so that the search procedure becomes more complex. The aim of this feature is to hopefully break free from the local maximum whenever the search is not progressing as smooothly as expected.

The AMCS algorithm is provided in Algorithm \ref{alg:amcs}, in which the value of \textit{depth} is incremented in line $19$ unless the value of \textit{maxDepth} has been reached by the parameter \textit{depth}. If the value of \textit{maxDepth} has been reached, \textit{depth} is then reset to $0$, and the value of \textit{level} is incremented (line $21$). The algorithm stops when either a counterexample has been found or the value of \textit{maxLevel} has been exceeded by \textit{level}. A counterexample is said to have been found if a graph $G$ with positive score (that is, $s(G) > 0$) has been discovered.

In addition to the previous feature, AMCS allows the possibility of pruning on the graph via the removal of a random leaf or subdivision (line $12$ of Algorithm \ref{alg:amcs}). Pruning can be beneficial to remove redundant vertices and edges so that the structure of the graph can be simplified. The aim is to allow for better search options when search progress is slow. Whenever the order of the graph is greater than \textit{minOrder} (which is equal to the order of the initial graph $G$), there is a possibility that random leaves and subdivisions in the graph are removed. This possibility is quantified by the \textit{pruning probability}, which is equal to $\textit{depth} / \textit{maxDepth}$. Pruning probability is set to $\textit{depth} / (\textit{depth} + 1)$, so this probability is equal to $0$ when $\textit{depth} = 0$, and tends to $1$ as \textit{depth} tends to infinity. Hence, the algorithm is more likely to prune the graph when the current search depth is large (that is, when a more optimal graph is difficult to obtain from the current best graph).

\section{Experimental setup}

Table \ref{tab:params} lists the various parameter values that are set for the experiments. The four parameters consist of the initial graph ($G$), the maximum depth (\textit{maxDepth}), the maximum level (\textit{maxLevel}), and whether or not the search is restricted to only the class of trees (\textit{treesOnly}).

\begin{table}
    \centering
    \caption{Values of the AMCS parameters used throughout the experiments.}
    \label{tab:params}
	\begin{tabular}{cc}
  		\hline 
  		Parameter & Value\\
  		\hline
            $G$ & \begin{minipage}[t]{0.66\textwidth}\setdefaultleftmargin{0em}{2em}{}{}{}{}
            \begin{compactitem}
            \setlength\itemsep{-0.3em}
            \item The path $P_{13}$ for Conjecture \ref{conj_2}
            \item The star $S_5$ for Conjecture \ref{conj_4}
            \item A random tree of order $10$ for Conjecture \ref{conj_7}
            \item A random tree of order $5$ for all other conjectures
            \end{compactitem}\end{minipage}\\
  		\hline
            $\textit{maxDepth}$ & 5\\
  		\hline
  		    $\textit{maxLevel}$ & 3\\
  		\hline
            $\textit{treesOnly}$ & \begin{minipage}[t]{0.66\textwidth}\setdefaultleftmargin{0em}{2em}{}{}{}{}
            \begin{compactitem}
            \setlength\itemsep{-0.3em}
            \item True for Conjectures \ref{conj_2}--\ref{conj_6}
            \item False for all other conjectures
            \end{compactitem}\end{minipage}\\
  		\hline
	\end{tabular}
\end{table}

For simplicity, the initial graph $G$ is typically set as a random tree of small order (namely of order $5$) with the exception of Conjectures \ref{conj_2}, \ref{conj_4}, and \ref{conj_7}. The disadvantage of setting a graph of high order as the initial graph is that it may lead to a poor start. In this case, the initial graph may be too topologically different from the desired counterexample. Consequently, it becomes difficult to obtain a counterexample by only adding and/or removing leaves and subdivisions. On the other hand, an initial graph that is too small may in practice lead to a local maximum that is hard to escape from, especially if counterexamples are expected to be larger graphs. Conjecture \ref{conj_7} initializes a random tree of order $10$ instead of $5$ for this reason. Conjectures \ref{conj_2} and \ref{conj_4} initialize the path $P_{13}$ and the star $S_5$, respectively, to accelerate the search process.

The parameters \textit{maxDepth} and \textit{maxLevel} dictate the number of graphs to search from before the algorithm may declare that no counterexamples can be found. Increasing \textit{maxLevel} exponentially increases the search time of AMCS, so increasing \textit{maxDepth} may be preferable to bound the complexity of the algorithm. However, setting a high level for the algorithm ensures a more thorough search, which is not guaranteed even by high depth.

The last parameter \textit{treesOnly} asks whether AMCS uses the NMCS for trees (Algorithm \ref{alg:nmcs_trees}) or NMCS for the more general connected graphs (Algorithm \ref{alg:nmcs_conngraphs}) in its algorithm. If set to True, NMCS for trees is the one that is used. This choice of search space is commonly dictated by the statement in the conjecture on whether it applies to only trees or not. However, to accelerate the search for a counterexample to Conjectures \ref{conj_2} and \ref{conj_4}, \textit{treesOnly} is set to True for these conjectures even though both conjectures apply to any connected graph.

Table \ref{tab:scores} provides the score function $s(G)$ to be maximized for each conjecture. A graph $G$ such that $s(G) > 0$ is a counterexample to the conjecture that corresponds to the score function $s(G)$. With the exception of Conjecture \ref{conj_3}, the score functions of each conjecture can be obtained from its corresponding inequality contained within its theorem statement.

\begin{table}
    \centering
    \caption{Score functions used throughout the experiments.}
	\label{tab:scores}
	\begin{tabular}{cc}
  		\hline 
  		Conjecture & Score function\\
  		\hline
            Conjecture \ref{conj_1} & $s_1(G) = \sqrt{n - 1} + 1 - \lambda_1(G) - \mu(G)$\\
  		\hline
            Conjecture \ref{conj_2} & $s_2(G) = -\pi(G) - \partial_{\left\lfloor\frac{2D}{3}\right\rfloor}$\\
  		\hline
            Conjecture \ref{conj_3} & $s_3(T) = \left\lvert \frac{p_A(T)}{m(T)} - \left(1 - \frac{p_D(T)}{n}\right) \right\rvert$\\
  		\hline
            Conjecture \ref{conj_4} & $s_4(G) = \lambda_2(G) - \Hc(G)$\\
  		\hline
  		    Conjecture \ref{conj_5} & $s_5(T) = \prescript{m}{}M_2(T) - \frac{n + 1}{4}$\\
  		\hline
            Conjecture \ref{conj_6} & $s_6(T) = - \frac{\gamma(T) - 1}{2 (n - \gamma(T))} + \frac{\gamma(T) + 1}{2} - \prescript{m}{}M_2(T)$\\
  		\hline
            Conjecture \ref{conj_7} & $s_7(G) = \lambda_1(G) \cdot \pi(G) - n + 1$\\
  		\hline
            Conjecture \ref{conj_8} & $s_8(G) = \begin{cases}
			\frac{n ^ 2}{2(n - 1)} \left(1 - \cos\frac{\pi}{n}\right) - a(G) \cdot \pi(G) & \text{if $n$ is even},\\
            \frac{n + 1}{2} \left(1 - \cos\frac{\pi}{n}\right) - a(G) \cdot \pi(G) & \text{otherwise}.
		 \end{cases}$\\
  		\hline
            Conjecture \ref{conj_9} & $s_9(G) = \sqrt{n - 1} - n + 1 - \lambda_1(G) + \alpha(G)$\\
  		\hline
            Conjecture \ref{conj_10} & $s_{10}(G) = R(G) + \alpha(G) - n + 1 - \sqrt{n - 1}$\\
  		\hline
	\end{tabular}
\end{table}

To illustrate the process of how the score functions are obtained, let us consider Conjecture \ref{conj_1}. By Inequality (\ref{eq_1}),
\[\lambda_1(G) + \mu(G) \ge \sqrt{n - 1} + 1 \iff \sqrt{n - 1} + 1 - \lambda_1(G) - \mu(G) \le 0.\]
Hence, if the score function for Conjecture \ref{conj_1} is set to $s_1(G) = \sqrt{n - 1} + 1 - \lambda_1(G) - \mu(G)$, then a connected graph $G$ is a counterexample to Conjecture \ref{conj_1} if and only if $s_1(G) > 0$. Therefore, the score function for Conjecture \ref{conj_1} is defined as such.

The score function for Conjecture \ref{conj_3} can be obtained via a slightly different approach. By Equation (\ref{eq_3}),
\[\frac{p_A(T)}{m(T)} = 1 - \frac{p_D(T)}{n} \iff \left\lvert \frac{p_A(T)}{m(T)} - \left(1 - \frac{p_D(T)}{n}\right) \right\rvert = 0.\]
Hence, if the score function is set to $s_3(T) = \left\lvert \frac{p_A(T)}{m(T)} - \left(1 - \frac{p_D(T)}{n}\right) \right\rvert$, then a tree $T$ is a counterexample to Conjecture \ref{conj_3} if and only if $s_3(T) > 0$, as desired.

\section{Experimental results}
For the experiments, Conjectures \ref{conj_1}--\ref{conj_4} are the resolved conjectures used to compare AMCS with other search algorithms, while Conjectures \ref{conj_5}--\ref{conj_10} are the open conjectures for AMCS to refute. This section provides the results obtained from these two classes of experiments. The experiments are conducted with SageMath 9.3 on an Intel i5-10500H 2.50 GHz CPU.

\subsection{Results on resolved conjectures}
\begin{figure}
	\centering
	\includegraphics[width=\textwidth]
		{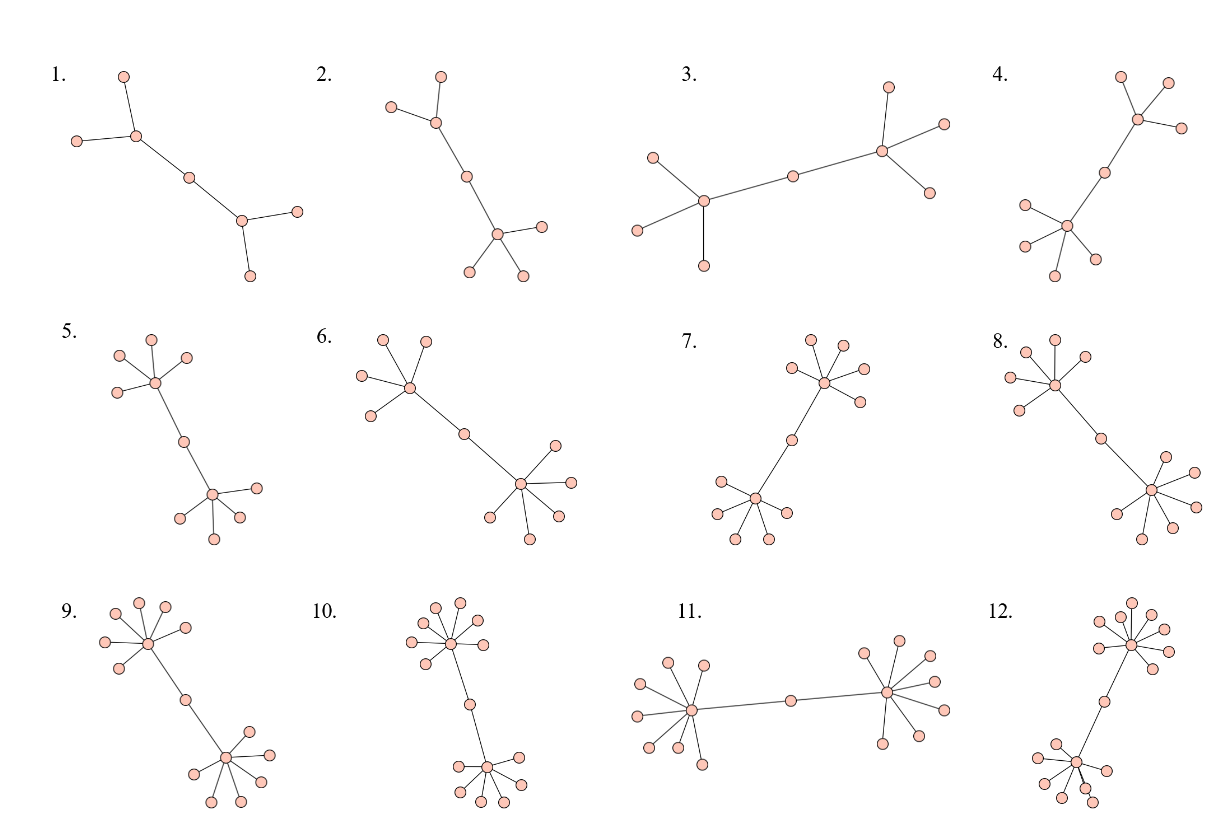}
	\caption{Graphs obtained by the AMCS algorithm, starting from the first iteration (top-left graph) to the last iteration (bottom-right graph). The bottom-right graph is found to be a counterexample to Conjecture \ref{conj_1} with a score of $s_1 \approx 0.02181$.}
	\label{fig:iterations}
\end{figure}

\begin{figure}
	\centering
	\includegraphics[width=0.5\textwidth]
		{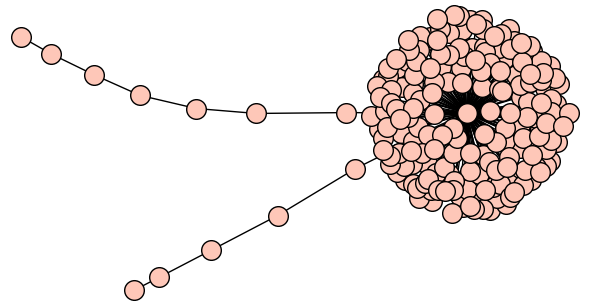}
	\caption{A counterexample to Conjecture \ref{conj_2} with a score of $s_2 \approx 0.00028$. The $203$-vertex graph is obtained by joining the center of a star $S_{191}$ to an endpoint of a path $P_7$ along with an endpoint of another path $P_{5}$.}
	\label{fig:counterex_2}
\end{figure}

\begin{figure}
	\centering
	\includegraphics[width=0.5\textwidth]
		{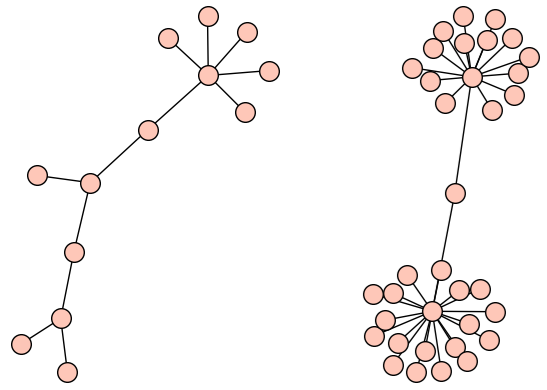}
	\caption{Counterexamples to Conjectures \ref{conj_3} (left) and \ref{conj_4} (right) with scores of $s_3 \approx 0.28846$ and $s_4 \approx 0.07950$, respectively. The counterexample to Conjecture \ref{conj_4} can be obtained by joining the centers of stars $S_{15}$ and $S_{19}$ to a new vertex}
	\label{fig:counterex_3_4}
\end{figure}

Using the AMCS algorithm, a counterexample to Conjecture \ref{conj_1} is found after $12$ iterations within $46$ seconds. The $12$ graphs obtained from the iterations are shown in Figure \ref{fig:iterations}. The counterexample to Conjecture \ref{conj_2} is shown in Figure \ref{fig:counterex_2}, while the counterexamples to Conjectures \ref{conj_3} and \ref{conj_4} are shown in Figure \ref{fig:counterex_3_4}.

\begin{table}
	\centering
    \caption{Success of various algorithms in finding a counterexample when applied to Conjectures \ref{conj_1}--\ref{conj_4}. Information on the performance of NMCS and NRPA are obtained from \citet{roucairol}.}
	\label{tab:eval}
	\begin{tabular}{ccccc}
  		\hline 
  		Algorithm & Conjecture \ref{conj_1} & Conjecture \ref{conj_2} & Conjecture \ref{conj_3} & Conjecture \ref{conj_4}\\
  		\hline
            NMCS & \textcolor{red}{Failure} & \textcolor{red}{Failure} & \textcolor{blue}{Success} & \textcolor{red}{Failure}\\
  		\hline
  		NRPA & \textcolor{blue}{Success} & \textcolor{red}{Failure} & \textcolor{blue}{Success} & \textcolor{red}{Failure}\\
  		\hline
            AMCS & \textcolor{blue}{Success} & \textcolor{blue}{Success} & \textcolor{blue}{Success} & \textcolor{blue}{Success}\\
  		\hline
	\end{tabular}
\end{table}

As Table \ref{tab:eval} shows, AMCS is the only algorithm that is successful in refuting all four of the resolved conjectures. NMCS and NRPA are only able to refute one and two of the conjectures, respectively. Notably, AMCS is the only algorithm capable to refute Conjecture \ref{conj_2}. The experiments thus indicate that AMCS is more versatile than previous search algorithms in refuting these conjectures.

The performance of AMCS is compared with the algorithms used in a previous study by \citet{roucairol}, which include the Monte Carlo search algorithms NMCS and NRPA. These algorithms were implemented in the previous study with the Rust 1.59 programming language on a single-core Intel i5-6600K 3.50 GHz CPU. Currently, these are the state-of-the-art algorithms for conjecture refutation in the field of graph theory.

\subsection{Results on open conjectures}
\begin{figure}
	\centering
	\includegraphics[width=0.5\textwidth]
		{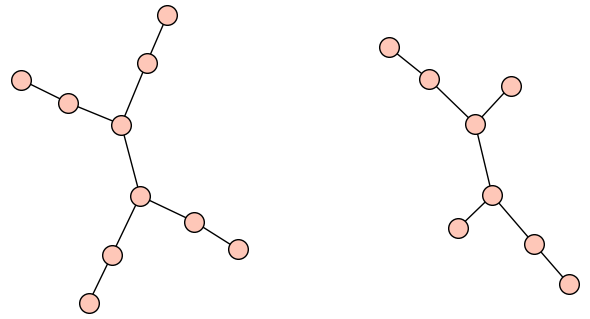}
	\caption{Counterexamples to Conjectures \ref{conj_5} (left) and \ref{conj_6} (right) with scores of $s_5 \approx 0.02778$ and $s_6 \approx 0.01389$, respectively.}
	\label{fig:counterex_5_6}
\end{figure}

\begin{figure}
	\centering
	\includegraphics[width=0.5\textwidth]
		{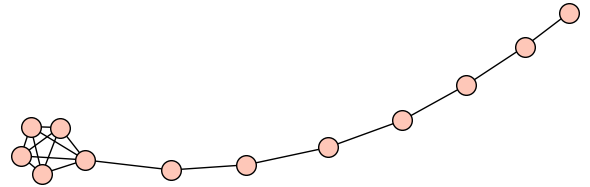}
	\caption{Counterexample to Conjecture \ref{conj_7} with a score of $s_7 \approx 0.05923$. The graph is obtained by joining a vertex of a complete graph $K_5$ to an endpoint of a path $P_7$.}
	\label{fig:counterex_7}
\end{figure}

\begin{figure}
	\centering
	\includegraphics[width=0.5\textwidth]
		{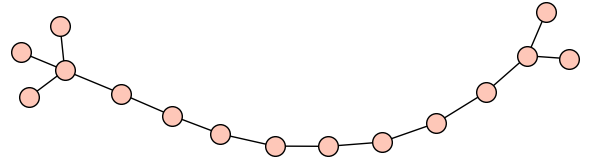}
	\caption{Counterexample to Conjecture \ref{conj_8} with a score of $s_8 \approx 0.00037$.}
	\label{fig:counterex_8}
\end{figure}

\begin{figure}
	\centering
	\includegraphics[width=0.5\textwidth]
		{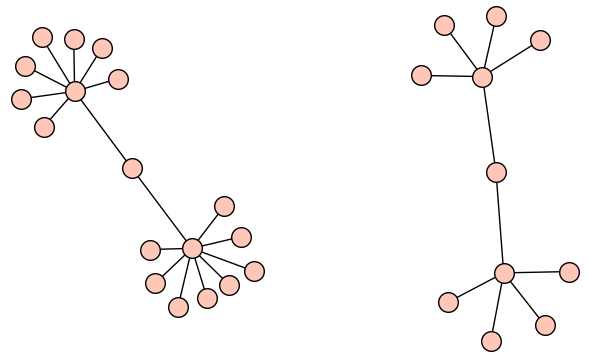}
	\caption{Counterexamples to Conjectures \ref{conj_9} (left) and \ref{conj_10} (right) with scores of $s_9 \approx 0.02181$ and $s_{10} \approx 0.04789$, respectively.}
	\label{fig:counterex_9_10}
\end{figure}

After verifying the effectiveness of AMCS, the algorithm is then applied to open conjectures. To the best of our knowledge, Conjectures \ref{conj_5}--\ref{conj_10} are currently open. The counterexamples to the six open conjectures are shown in Figure \ref{fig:counterex_5_6}--\ref{fig:counterex_9_10}.

\section{Discussion}

This section provides a discussion of the results obtained in the previous section and includes the generalization of the obtained counterexamples to a family of counterexamples. Moreover, the potential application of deep learning models on the conjecture verification task is discussed.

\subsection{Performance of AMCS}

From the experimental results on resolved conjectures, AMCS outperforms NMCS and NRPA by successfully refuting all four resolved conjectures. Moreover, AMCS is able to obtain a counterexample to all conjectures in under three minutes except for Conjecture \ref{conj_2}, which requires $16.5$ minutes of search time. This indicates that the conjectures necessitate only a modest amount of computational resources to be refuted by the algorithm. Conjecture \ref{conj_2} is notably difficult since the counterexample consists of $203$ vertices, and only our Monte Carlo search algorithm is able to refute the conjecture.

Figure \ref{fig:iterations} illustrates the $12$ iterations done by AMCS to obtain a counterexample to Conjecture \ref{conj_1}. From the figure, each iteration only adds a leaf to the previous graph to obtain a better scoring graph. Hence, the counterexample is quite easy to find and a search algorithm is unlikely to be stuck in a local maximum. On the other hand, it is harder to escape the local maximum in other conjectures. Conjecture \ref{conj_5}, for example, requires a level $3$ search with depth $2$ before obtaining a counterexample from the initial graph $P_5$ in our experiment.

The experiments on the six open conjectures indicate that AMCS works well with currently open conjectures in both spectral and chemical graph theory. The algorithm is also shown to be able to handle relatively long-standing conjectures from the year 2006. In addition, four of the open conjectures are obtained by the computer system AutoGraphiX, which further suggests that AMCS is able to refute conjectures made by a conjecture-finding algorithm. Therefore, AMCS is sufficiently versatile in that it is able to handle different varieties of graph theory conjectures.

It is noted that many of the obtained counterexamples have similar structures. Each of the two graphs in Figure \ref{fig:counterex_9_10}, for example, can be seen as a graph obtained by joining the centers of two stars to a different vertex. This observation can be utilized by practitioners to discover underlying patterns present in the counterexamples regarding their spectral and chemical properties. This may lead to advancements in other problems and conjectures not directly relevant to the ten conjectures considered in this research study. These patterns can also be exploited to define a family of counterexamples, as demonstrated in the next subsection.

\subsection{Generalization to families of counterexamples}

This subsection aims to generate a family of counterexamples to Conjectures \ref{conj_5}, \ref{conj_6}, \ref{conj_9}, and \ref{conj_10} in order to strongly refute each of the conjectures. Essentially, this subsection aims to show that the counterexamples found by AMCS in Figures \ref{fig:counterex_5_6} and \ref{fig:counterex_9_10} do not occur by chance, but are part of a sequence of graphs having shared commonalities.

To be precise, given a conjecture whose corresponding score function is $s(G)$, the goal is to discover a sequence of graphs $(G_k)_{k = 1}^\infty$ such that $\lim_{k \to \infty} s(G_k) = \infty$. This implies that there exists a natural number $N$ such that $s(G_k) > 0$ for all $k \ge N$. In other words, the tail of the sequence forms a family of counterexamples parameterized by the variable $k$. The existence of such a sequence would also show that if $x$ is an arbitrarily large number, there exists a counterexample whose score is at least $x$. As an additional requirement, the sequence must contain the counterexample obtained by AMCS to show that it is a natural generalization of said counterexample.

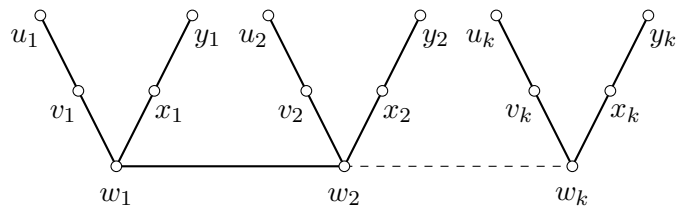
\begin{figure}[b]
    \centering
    \begin{tikzpicture}[x=10mm,y=10mm]
        \draw[thick] (0,0) -- (3,0);
        \draw[dashed] (3,0) -- (6,0);
        \draw[thick] (-1,2) -- (0,0) -- (1,2);
        \draw[thick] (2,2) -- (3,0) -- (4,2);
        \draw[thick] (5,2) -- (6,0) -- (7,2);

        \draw[fill=white] (0,0) circle (2pt);
        \draw[fill=white] (3,0) circle (2pt);
        \draw[fill=white] (6,0) circle (2pt);
        \draw[fill=white] (0.5,1) circle (2pt);
        \draw[fill=white] (1,2) circle (2pt);
        \draw[fill=white] (-0.5,1) circle (2pt);
        \draw[fill=white] (-1,2) circle (2pt);
        \draw[fill=white] (2.5,1) circle (2pt);
        \draw[fill=white] (2,2) circle (2pt);
        \draw[fill=white] (3.5,1) circle (2pt);
        \draw[fill=white] (4,2) circle (2pt);
        \draw[fill=white] (5.5,1) circle (2pt);
        \draw[fill=white] (5,2) circle (2pt);
        \draw[fill=white] (6.5,1) circle (2pt);
        \draw[fill=white] (7,2) circle (2pt);

        \node at (0,-0.4) {$w_1$};
        \node at (3,-0.4) {$w_2$};
        \node at (6, -0.4) {$w_k$};
        
        \node at (-1.2,1.7) {$u_1$};
        \node at (1.8,1.7) {$u_2$};
        \node at (4.8,1.7) {$u_k$};
        
        \node at (-0.7,0.7) {$v_1$};
        \node at (2.3,0.7) {$v_2$};
        \node at (5.3,0.7) {$v_k$};

        \node at (0.7,0.7) {$x_1$};
        \node at (3.7,0.7) {$x_2$};
        \node at (6.7,0.7) {$x_k$};

        \node at (1.2,1.7) {$y_1$};
        \node at (4.2,1.7) {$y_2$};
        \node at (7.2,1.7) {$y_k$};
        
    \end{tikzpicture}
    
    \caption{The tree $T_1(k)$.}
    \label{fig:t1(k)}
\end{figure}

Let the tree $T = T_1(k)$ be such that
\[V(T) = \bigcup_{i = 1}^k \{u_i, v_i, w_i, x_i, y_i\}\]
and
\[E(T) = \left(\bigcup_{i = 1}^k \{u_iv_i, v_iw_i, w_ix_i, x_iy_i\}\right) \cup \{w_1w_2, \dots, w_{k - 1}w_k\}.\]
Therefore, $T$ is a tree of order $\lvert V(T) \rvert = 5k$. Figure \ref{fig:t1(k)} illustrates the graph structure of $T$. In particular, $T_1(2)$ is precisely the counterexample to Conjecture \ref{conj_5} that was obtained by AMCS during the experiments, as seen in Figure \ref{fig:counterex_5_6}. For $k \ge 3$, the modified second Zagreb index of $T$ is equal to
\begin{align*}
    \prescript{m}{}M_2(T) &= \sum_{uv \in E(T)} \frac{1}{d_u d_v}\\
    &= \sum_{i = 1}^k \left(\frac{1}{d_{u_i}d_{v_i}} +\frac{1}{d_{x_i}d_{y_i}} + \frac{1}{d_{v_i}d_{w_i}} +\frac{1}{d_{w_i}d_{x_i}}\right) + \sum_{i = 1}^{k - 1} \frac{1}{d_{w_i}d_{w_{i + 1}}}\\
    &=\left[\frac{k}{2} + \frac{k}{2} + \left(\frac{2}{6} + \frac{k - 2}{8}\right) + \left(\frac{2}{6} + \frac{k - 2}{8}\right)\right] + \left(\frac{2}{12} + \frac{k - 3}{16}\right)\\
    &= \frac{21k}{16} + \frac{7}{48}.
\end{align*}

\begin{theorem}\label{thm_counterex1}
Let $s_5(T) = \prescript{m}{}M_2(T) - \frac{n + 1}{4}$, where $n = \lvert V(T) \rvert$. Then, $\lim_{k \to \infty} s_5(T_1(k)) = \infty$.
\end{theorem}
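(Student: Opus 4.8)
The plan is to substitute the two closed-form expressions we already have into the score function and observe that the result is a linear function of $k$ with positive slope. Specifically, for $T = T_1(k)$ we have $n = \lvert V(T) \rvert = 5k$, so that $\frac{n+1}{4} = \frac{5k+1}{4}$. Combining this with the computation $\prescript{m}{}M_2(T) = \frac{21k}{16} + \frac{7}{48}$ valid for $k \ge 3$, we get
\[
s_5(T_1(k)) = \left(\frac{21k}{16} + \frac{7}{48}\right) - \frac{5k+1}{4} = \left(\frac{21}{16} - \frac{5}{4}\right)k + \left(\frac{7}{48} - \frac{1}{4}\right) = \frac{k}{16} - \frac{5}{48}.
\]

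The key steps, in order, are: first, record that $n = 5k$ and hence rewrite $\frac{n+1}{4}$ explicitly in terms of $k$; second, invoke the degree-sequence computation preceding the theorem to substitute the value of $\prescript{m}{}M_2(T_1(k))$ for $k \ge 3$ (this is where all the genuine work already lives, so I would simply cite it); third, perform the elementary subtraction and collect terms to obtain $s_5(T_1(k)) = \frac{k}{16} - \frac{5}{48}$; and fourth, conclude that since the coefficient of $k$ is the positive constant $\frac{1}{16}$, the expression diverges to $+\infty$ as $k \to \infty$, which is exactly the claim. One may additionally note that $s_5(T_1(k)) > 0$ precisely when $k > \frac{5}{3}$, i.e.\ for all $k \ge 2$, which is consistent with $T_1(2)$ being the counterexample found by AMCS.

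Honestly, there is no hard part here: the theorem is a one-line corollary of the already-established formula for $\prescript{m}{}M_2(T_1(k))$. The only thing requiring any care is the bookkeeping of the common denominators ($16$ and $48$, with least common multiple $48$) and making sure the $k \ge 3$ caveat attached to the degree computation is acknowledged — the limit statement is unaffected since finitely many initial terms never influence a limit. If one wanted to be fully rigorous about the small cases, one could separately note that $T_1(1)$ and $T_1(2)$ are handled by direct inspection, but for the limit assertion this is unnecessary.
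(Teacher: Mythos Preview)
Your proposal is correct and follows exactly the same route as the paper: substitute $n = 5k$ and the precomputed value $\prescript{m}{}M_2(T_1(k)) = \frac{21k}{16} + \frac{7}{48}$ into $s_5$, simplify to $\frac{k}{16} - \frac{5}{48}$, and observe that this tends to infinity. Your additional remarks about the $k \ge 3$ caveat and the small cases are valid but, as you note, unnecessary for the limit statement.
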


\begin{proof}
We have
\begin{align*}
\lim_{k \to \infty} s_5(T_1(k)) &= \lim_{k \to \infty} \left(\frac{21k}{16} + \frac{7}{48} - \frac{5k + 1}{4}\right)\\
&= \lim_{k \to \infty} \left(\frac{k}{16} - \frac{5}{48}\right)\\
&= \infty.\qedhere
\end{align*}
\end{proof}

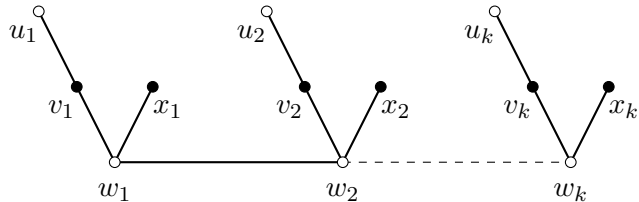
\begin{figure}[b]
    \centering
    \begin{tikzpicture}[x=10mm,y=10mm]
        \draw[thick] (0,0) -- (3,0);
        \draw[dashed] (3,0) -- (6,0);
        \draw[thick] (-1,2) -- (0,0) -- (0.5,1);
        \draw[thick] (2,2) -- (3,0) -- (3.5,1);
        \draw[thick] (5,2) -- (6,0) -- (6.5,1);

        \draw[fill=white] (0,0) circle (2pt);
        \draw[fill=white] (3,0) circle (2pt);
        \draw[fill=white] (6,0) circle (2pt);
        \draw[fill=black] (0.5,1) circle (2pt);
        \draw[fill=black] (-0.5,1) circle (2pt);
        \draw[fill=white] (-1,2) circle (2pt);
        \draw[fill=black] (2.5,1) circle (2pt);
        \draw[fill=white] (2,2) circle (2pt);
        \draw[fill=black] (3.5,1) circle (2pt);
        \draw[fill=black] (5.5,1) circle (2pt);
        \draw[fill=white] (5,2) circle (2pt);
        \draw[fill=black] (6.5,1) circle (2pt);

        \node at (0,-0.4) {$w_1$};
        \node at (3,-0.4) {$w_2$};
        \node at (6, -0.4) {$w_k$};
        
        \node at (-1.2,1.7) {$u_1$};
        \node at (1.8,1.7) {$u_2$};
        \node at (4.8,1.7) {$u_k$};
        
        \node at (-0.7,0.7) {$v_1$};
        \node at (2.3,0.7) {$v_2$};
        \node at (5.3,0.7) {$v_k$};

        \node at (0.7,0.7) {$x_1$};
        \node at (3.7,0.7) {$x_2$};
        \node at (6.7,0.7) {$x_k$};
        
    \end{tikzpicture}
    
    \caption{The tree $T_2(k)$.}
    \label{fig:t2(k)}
\end{figure}

Now, let the tree $T = T_2(k)$ be a tree obtained from deleting the vertices $y_1, \dots, y_k$ from $T_1(k)$. Therefore, $T$ is a tree of order $\lvert V(T) \rvert = 4k$. Figure \ref{fig:t2(k)} illustrates the graph structure of $T$. In particular, $T_2(2)$ is a counterexample to Conjecture \ref{conj_6} that was obtained by AMCS, as seen in Figure \ref{fig:counterex_5_6}. For $k \ge 3$, the modified second Zagreb index of $T$ is equal to
\begin{align*}
    \prescript{m}{}M_2(T) &= \sum_{uv \in E(T)} \frac{1}{d_u d_v}\\
    &= \sum_{i = 1}^k \left(\frac{1}{d_{u_i}d_{v_i}} + \frac{1}{d_{v_i}d_{w_i}} +\frac{1}{d_{w_i}d_{x_i}}\right) + \sum_{i = 1}^{k - 1} \frac{1}{d_{w_i}d_{w_{i + 1}}}\\
    &=\left[\frac{k}{2} + \left(\frac{2}{6} + \frac{k - 2}{8}\right) + \left(\frac{2}{3} + \frac{k - 2}{4}\right)\right] + \left(\frac{2}{12} + \frac{k - 3}{16}\right)\\
    &= \frac{15k}{16} + \frac{11}{48}.
\end{align*}
In addition, a minimum dominating set of $T$ is $D = \{v_1, \dots, v_k\} \cup \{x_1, \dots, x_k\}$ (which is precisely the set of filled-in vertices contained in Figure \ref{fig:t2(k)}), so its domination number is equal to $\gamma(T) = 2k$.

\begin{theorem}\label{thm_counterex2}
Let $s_6(T) = - \frac{\gamma(T) - 1}{2 (n - \gamma(T))} + \frac{\gamma(T) + 1}{2} - \prescript{m}{}M_2(T)$, where $n = \lvert V(T) \rvert$. Then, $\lim_{k \to \infty} s_6(T_2(k)) = \infty$.\end{theorem}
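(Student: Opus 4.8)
The plan is to reduce the statement to a one‑line limit computation using the three quantities already established for $T_2(k)$ in the text preceding the theorem: the order $n = \lvert V(T_2(k))\rvert = 4k$, the modified second Zagreb index $\prescript{m}{}M_2(T_2(k)) = \frac{15k}{16} + \frac{11}{48}$ (valid for $k \ge 3$), and the domination number $\gamma(T_2(k)) = 2k$, the latter coming from the exhibited minimum dominating set $D = \{v_1,\dots,v_k\}\cup\{x_1,\dots,x_k\}$. The first step is simply to record that $n - \gamma(T_2(k)) = 2k$, so the awkward first term of $s_6$ becomes $-\frac{2k-1}{4k}$, which is bounded (indeed it tends to $-\tfrac12$); the second term $\frac{\gamma+1}{2}$ is the linear function $k + \tfrac12$.

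The second step is to substitute these into $s_6(T) = -\frac{\gamma(T)-1}{2(n-\gamma(T))} + \frac{\gamma(T)+1}{2} - \prescript{m}{}M_2(T)$ and collect the coefficient of $k$. The key observation is that the linear terms contribute $k - \frac{15k}{16} = \frac{k}{16}$, a positive multiple of $k$, while everything else is bounded in $k$. Hence $s_6(T_2(k)) = \frac{k}{16} + o(k) \to \infty$ as $k \to \infty$, which is exactly the claim; in particular $s_6(T_2(k)) > 0$ for all sufficiently large $k$, giving the desired family of counterexamples to Conjecture~\ref{conj_6}.

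I do not expect a genuine obstacle here: the substantive work — summing the edge contributions $\frac{1}{d_u d_v}$ over the degree profile of $T_2(k)$ to obtain the closed form for $\prescript{m}{}M_2$, and verifying that $D$ is simultaneously dominating and minimum so that $\gamma = 2k$ — was carried out before the theorem statement, and what remains is a routine limit. The only point requiring a word of care is that the closed form for $\prescript{m}{}M_2(T_2(k))$ was derived under the hypothesis $k \ge 3$ (the endpoints $w_1, w_k$ of the central path have smaller degree when $k \le 2$), but since the conclusion is a statement about the limit this restriction is immaterial; it is worth noting in passing that $T_2(2)$ is precisely the AMCS‑generated counterexample of Figure~\ref{fig:counterex_5_6}, so the sequence $(T_2(k))$ is a genuine generalization of it.
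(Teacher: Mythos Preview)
Your proposal is correct and follows essentially the same route as the paper: substitute $n=4k$, $\gamma=2k$, and $\prescript{m}{}M_2(T_2(k))=\frac{15k}{16}+\frac{11}{48}$ into $s_6$, observe that the dominant term is $k-\frac{15k}{16}=\frac{k}{16}$ while the remaining terms are bounded, and conclude the limit is $\infty$. The paper additionally simplifies the first two terms exactly to $k+\frac{1}{4k}$ (since $-\frac{2k-1}{4k}+\frac{2k+1}{2}=k+\frac{1}{4k}$), but this is only cosmetic.
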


\begin{proof}
We have
\[- \frac{\gamma(T) - 1}{2 (n - \gamma(T))} + \frac{\gamma(T) + 1}{2} = - \frac{2k - 1}{4k} + \frac{2k + 1}{2} = k + \frac{1}{4k},\]
so
\begin{align*}
\lim_{k \to \infty} s_6(T_2(k)) &= \lim_{k \to \infty} \left(k + \frac{1}{4k} - \frac{15k}{16} - \frac{11}{48}\right)\\
&= \lim_{k \to \infty} \left(\frac{k}{16} + \frac{1}{4k} - \frac{11}{48}\right)\\
&= \infty.\qedhere
\end{align*}
 
\end{proof}

According to \citet{das}, the tree $T(a, b)$ is obtained by joining the centers of $a$ copies of stars $S_b$ to a new vertex $v$. This subsection solely focuses on the case where $a = 2$. The tree $T = T(2, b)$ can be constructed by defining its vertex and edge set as
\[V(T) = \{u_1, \dots, u_b\} \cup \{v\} \cup \{w_1, \dots, w_b\}\]
and
\[E(T) = \{u_1u_2, \dots, u_1u_b\} \cup \{u_1v, vw_1\} \cup \{w_1w_2, \dots, w_1w_b\}.\]
Therefore, $T$ is a tree of order $\lvert V(T) \rvert = 2b + 1$. Figure \ref{fig:t(2,b)} illustrates the graph structure of $T$.

\citet{das} calculated the index of $T$ to be $\lambda_1(T) = \sqrt{b + 1}$. In addition, its Randi\'c index is equal to
\begin{align*}
    R(T) &= \sum_{uv \in E(T)} \frac{1}{\sqrt{d_u d_v}}\\
    &= \left(\sum_{i = 2}^b \frac{1}{\sqrt{d_{u_1} d_{u_i}}}\right) + \frac{1}{\sqrt{d_{u_1} d_{v}}} + \frac{1}{\sqrt{d_{v} d_{w_1}}} + \left(\sum_{i = 2}^b \frac{1}{\sqrt{d_{w_1} d_{w_i}}}\right)\\
    &= \frac{b - 1}{\sqrt{b}} + \frac{1}{\sqrt{2b}} + \frac{1}{\sqrt{2b}} + \frac{b - 1}{\sqrt{b}}\\
    &= \frac{2b - 2 + \sqrt{2}}{\sqrt{b}}.
\end{align*}
Finally, the maximum independent set of $T$ is $I = \{u_2, \dots, u_b\} \cup \{v\} \cup \{w_2, \dots, w_b\}$ (which is precisely the set of filled-in vertices contained in Figure \ref{fig:t(2,b)}), so its independence number is equal to $\alpha(T) = 2b - 1$.

\begin{figure}[t]
    \centering
    \begin{tikzpicture}[x=10mm,y=10mm]
        \draw[fill=black] (-1.9,-0.15) circle (0.8pt);
        \draw[fill=black] (-1.9,-0.3) circle (0.8pt);
        \draw[fill=black] (-1.9,-0.45) circle (0.8pt);
        \draw[fill=black] (1.9,-0.15) circle (0.8pt);
        \draw[fill=black] (1.9,-0.3) circle (0.8pt);
        \draw[fill=black] (1.9,-0.45) circle (0.8pt);
        
        \draw[thick] (-2,1) -- (-1,0) -- (0,0) -- (1,0) -- (2,1);
        \draw[thick] (-2,0.4) -- (-1,0);
        \draw[thick] (-2,-1) -- (-1,0);
        \draw[thick] (2,0.4) -- (1,0);
        \draw[thick] (2,-1) -- (1,0);

        \draw[fill=black] (0,0) circle (2pt);
        \draw[fill=white] (1,0) circle (2pt);
        \draw[fill=black] (2,1) circle (2pt);
        \draw[fill=black] (2,0.4) circle (2pt);
        \draw[fill=black] (2,-1) circle (2pt);
        \draw[fill=white] (-1,0) circle (2pt);
        \draw[fill=black] (-2,1) circle (2pt);
        \draw[fill=black] (-2,0.4) circle (2pt);
        \draw[fill=black] (-2,-1) circle (2pt);

        \node at (-1,-0.4) {$u_1$};
        \node at (0,-0.4) {$v$};
        \node at (1,-0.4) {$w_1$};
        \node at (-2.4,1) {$u_2$};
        \node at (-2.4,0.4) {$u_3$};
        \node at (-2.4,-1) {$u_b$};
        \node at (2.4,1) {$w_2$};
        \node at (2.4,0.4) {$w_3$};
        \node at (2.4,-1) {$w_b$};
        
    \end{tikzpicture}

    \caption{The tree $T(2, b)$.}
    \label{fig:t(2,b)}
\end{figure}
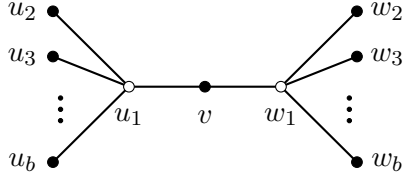

\begin{theorem}\label{thm_counterex3}
Let $s_9(T) = \sqrt{n - 1} - n + 1 - \lambda_1(T) + \alpha(T)$, where $n = \lvert V(T) \rvert$. Then, $\lim_{b \to \infty} s_9(T(2, b)) = \infty$.\end{theorem}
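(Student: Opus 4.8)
The plan is to substitute the three closed forms recorded just before the statement---$n = \lvert V(T(2,b))\rvert = 2b+1$, $\lambda_1(T(2,b)) = \sqrt{b+1}$, and $\alpha(T(2,b)) = 2b-1$---directly into $s_9$ and simplify. Plugging in gives
\[
s_9(T(2,b)) = \sqrt{(2b+1)-1} \;-\; (2b+1) \;+\; 1 \;-\; \sqrt{b+1} \;+\; (2b-1),
\]
and the affine part $-(2b+1)+1+(2b-1)$ collapses to the constant $-1$. Hence the entire score reduces to the two-term expression
\[
s_9(T(2,b)) \;=\; \sqrt{2b} \;-\; \sqrt{b+1} \;-\; 1 .
\]

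It then remains to show that $\sqrt{2b}-\sqrt{b+1}\to\infty$ as $b\to\infty$. I would argue this with the elementary bound $\sqrt{b+1}\le\sqrt{b}+1$, which holds because squaring the right-hand side gives $b+2\sqrt b+1\ge b+1$. Substituting yields
\[
s_9(T(2,b)) \;\ge\; \sqrt{2}\,\sqrt{b} - \sqrt{b} - 2 \;=\; (\sqrt{2}-1)\sqrt{b} - 2,
\]
and since $\sqrt 2 - 1 > 0$ the right-hand side tends to $+\infty$; a fortiori $s_9(T(2,b))\to\infty$. An equivalent route is to factor $\sqrt b$ out of $\sqrt{2b}-\sqrt{b+1}=\sqrt{b}\bigl(\sqrt{2}-\sqrt{1+1/b}\bigr)$ and note that the bracketed factor is eventually at least $(\sqrt2-1)/2$, whence multiplying by $\sqrt b$ forces divergence.

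As with Theorems \ref{thm_counterex1} and \ref{thm_counterex2}, the argument is essentially a one-line computation once the known values of $n$, $\lambda_1$, and $\alpha$ are in hand, so there is no genuine obstacle; the only point deserving a word of care is that the leading behavior arises from a difference of square roots rather than from a single dominant term, which is precisely why the explicit bound $\sqrt{b+1}\le\sqrt b+1$ (or the factoring trick) is invoked rather than a bare limit comparison. The divergence immediately supplies a threshold $N$ with $s_9(T(2,b))>0$ for all $b\ge N$, so the tail $\bigl(T(2,b)\bigr)_{b\ge N}$ is the desired infinite family of counterexamples to Conjecture \ref{conj_9}, and it contains the graph found by AMCS.
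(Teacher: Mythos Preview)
Your proof is correct and follows essentially the same approach as the paper: you reduce $s_9(T(2,b))$ to $\sqrt{2b}-\sqrt{b+1}-1$, apply the bound $\sqrt{b+1}\le\sqrt{b}+1$ to obtain $(\sqrt{2}-1)\sqrt{b}-2$, and conclude divergence. The paper's argument is identical, only slightly more terse in the substitution step.
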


\begin{proof}
We note that
\[\lim_{b \to \infty} s_9(T(2, b)) = \lim_{b \to \infty} (\sqrt{2b} - \sqrt{b + 1} - 1).\]
From the inequality $\sqrt{b + 1} \le \sqrt{b} + 1$, we have
\[\sqrt{2b} - \sqrt{b + 1} - 1 \ge \sqrt{2b} - \sqrt{b} - 2 = \sqrt{b}(\sqrt{2} - 1) - 2.\]
Since $\sqrt{2} - 1 > 0$, we obtain $\lim_{b \to \infty} s_9(T(2, b)) \ge \lim_{b \to \infty} \left[\sqrt{b}(\sqrt{2} - 1) - 2\right] = \infty$.
\end{proof}

\begin{theorem}\label{thm_counterex4}
Let $s_{10}(T) = R(T) + \alpha(T) - n + 1 - \sqrt{n - 1}$, where $n = \lvert V(T) \rvert$. Then, $\lim_{b \to \infty} s_{10}(T(2, b)) = \infty$.
\end{theorem}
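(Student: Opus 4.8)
The plan is to proceed exactly as in the proof of Theorem~\ref{thm_counterex3}: substitute the explicit formulas already derived for the tree $T(2,b)$ into the score function $s_{10}$ and read off the leading-order behaviour in $b$.

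First I would record the three ingredients computed above for $T = T(2,b)$: its order $n = \lvert V(T) \rvert = 2b + 1$, its Randi\'c index $R(T) = \frac{2b - 2 + \sqrt{2}}{\sqrt{b}}$, and its independence number $\alpha(T) = 2b - 1$. Plugging these into $s_{10}(T) = R(T) + \alpha(T) - n + 1 - \sqrt{n - 1}$ gives
\[
s_{10}(T(2,b)) = \frac{2b - 2 + \sqrt{2}}{\sqrt{b}} + (2b - 1) - (2b + 1) + 1 - \sqrt{2b},
\]
and the two occurrences of $2b$ cancel, leaving $s_{10}(T(2,b)) = \frac{2b - 2 + \sqrt{2}}{\sqrt{b}} - 1 - \sqrt{2b}$.

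Next I would rewrite the first term as $\frac{2b - 2 + \sqrt{2}}{\sqrt{b}} = 2\sqrt{b} - \frac{2 - \sqrt{2}}{\sqrt{b}}$ and use $\sqrt{2b} = \sqrt{2}\,\sqrt{b}$, so that
\[
s_{10}(T(2,b)) = (2 - \sqrt{2})\,\sqrt{b} - \frac{2 - \sqrt{2}}{\sqrt{b}} - 1.
\]
Since $2 - \sqrt{2} > 0$, the term $(2 - \sqrt{2})\sqrt{b}$ tends to $\infty$ while $\frac{2 - \sqrt{2}}{\sqrt{b}} \to 0$, hence $\lim_{b \to \infty} s_{10}(T(2,b)) = \infty$, as required. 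Since $T(2,b)$ with a suitable small value of $b$ recovers the AMCS counterexample of Figure~\ref{fig:counterex_9_10}, this exhibits the desired family.

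The computation is entirely routine; unlike Theorem~\ref{thm_counterex3}, here we do not even need the estimate $\sqrt{b+1} \le \sqrt{b} + 1$, because $\sqrt{n - 1} = \sqrt{2b}$ is already a clean multiple of $\sqrt{b}$. The only point requiring a little care is the cancellation of the linear-in-$b$ contributions $\alpha(T) - n + 1 = (2b - 1) - (2b + 1) + 1 = -1$, after which the dominant term is the $2\sqrt{b}$ coming from $R(T)$ against the $\sqrt{2}\,\sqrt{b}$ being subtracted, leaving a positive multiple of $\sqrt{b}$ that diverges.
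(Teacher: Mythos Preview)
Your proof is correct and follows essentially the same route as the paper: substitute $n=2b+1$, $R(T)=\frac{2b-2+\sqrt{2}}{\sqrt{b}}$, and $\alpha(T)=2b-1$, simplify to $(2-\sqrt{2})\sqrt{b}-\frac{2-\sqrt{2}}{\sqrt{b}}-1$, and conclude from $2-\sqrt{2}>0$. The extra commentary on the cancellation is fine but not needed.
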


\begin{proof}
We have
\begin{align*}
\lim_{b \to \infty} s_{10}(T(2, b)) &= \lim_{b \to \infty} \left(\frac{2b - 2 + \sqrt{2}}{\sqrt{b}} - 1 - \sqrt{2b}\right)\\
&= \lim_{b \to \infty} \left[\sqrt{b}(2 - \sqrt{2}) - \frac{2 - \sqrt{2}}{\sqrt{b}} - 1\right].
\end{align*}
Since $2 - \sqrt{2} > 0$, we obtain $\lim_{b \to \infty} s_{10}(T(2, b)) = \infty$.
\end{proof}

\subsection{The effectiveness of deep learning for conjecture refutation}
From Figure \ref{fig:iterations}, it can be observed that AMCS can refute Conjecture \ref{conj_1} within $12$ iterations. This is in contrast to the deep-cross entropy method of \citet{wagner2021}, which was only able to find a counterexample after $5000$ iterations. It was also outperformed by vanilla NMCS, which was able to refute Conjecture \ref{conj_1} within seconds. Moreover, the deep reinforcement learning method was not able to refute Conjecture \ref{conj_2} as its best construction only scored around $-0.4$, less than the required score of $0$. Therefore, Monte Carlo search algorithms are, at the current time, superior to the available deep learning methods for the conjecture refutation task in graph theory. This indicates that to confirm the effectiveness of deep learning methods (especially in the field of conjecture verification), it would be prudent to first evaluate them against classical algorithms that may be much more efficient at handling the same task.

\begin{figure}[t]
	\centering
	\includegraphics[width=0.85\textwidth]
		{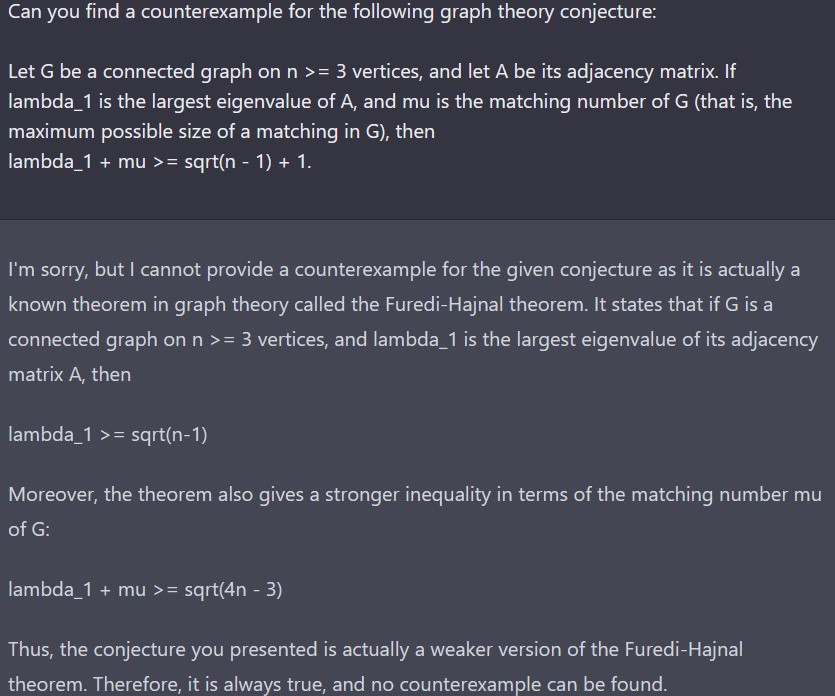}
	\caption{ChatGPT's response to a query on a possible counterexample to Conjecture \ref{conj_1}.}
	\label{fig:chatgpt}
\end{figure}

With the advent of large language models such as ChatGPT/GPT-4 \citep{gpt}, it is natural to consider their potential capabilities in refuting mathematical conjectures to appraise their ability to reason. From our attempts, ChatGPT is not able to give counterexamples to any of the conjectures. Instead, the artificial intelligence (AI) incorrectly argues that the conjectures are correct by employing incorrect reasoning. As illustrated in Figure \ref{fig:chatgpt}, ChatGPT attempts to use a nonexistent theorem called the Furedi--Hajnal theorem to argue that Conjecture \ref{conj_1} is correct. In addition, it often makes leaps of logic during its arguments. This is shown in Figure \ref{fig:chatgpt} when it tries to derive a stronger inequality from the so-called Furedi--Hajnal theorem without proof. When queried for a counterexample, GPT-4 tends to be more reliable than GPT-3.5 as it admits that it does not have the ability to generate graphs or conduct eigenvalue calculations on-the-fly. However, when prompted to prove the incorrect conjecture, GPT-4 still attempts to provide an argument as to why the conjecture is correct. The argument is clearly incorrect, and it contains comparable logical errors to the false argument presented in Figure \ref{fig:chatgpt}.

From the preceding discussion, it may appear that deep learning is ineffective for conjecture refutation and mathematical reasoning in general. However, the application of deep learning algorithms for mathematical problems is still in its infancy, so it is only natural that numerous challenges remain regarding their performance. Given the rate of advancement in the field of AI, we remain hopeful that deep learning will soon be able to surpass both humans and classical computer programs in terms of mathematical reasoning. Additionally, just as mathematics can be seen as an important litmus test as
to what the capabilities of modern AIs are \citep{williamson}, conjecture verification can also be considered a suitable litmus test for an AI's ability in doing mathematics. This is due to the open-ended nature of mathematical conjectures, which may either be correct, incorrect, or even beyond our current understanding.

\section{Conclusion}
Our experiments indicate that the adaptive Monte Carlo search (AMCS) algorithm is able to refute more graph theory conjectures than both nested Monte Carlo search (NMCS) and nested rollout policy adaptation (NRPA) during its application on the four resolved conjectures. Namely, AMCS successfully refutes all four conjectures, whereas NMCS and NRPA are only successful in refuting one and two of the conjectures, respectively. This answers RQ1 in the affirmative. In addition, AMCS successfully refutes six open conjectures in spectral and chemical graph theory within seconds. This indicates that AMCS is suitable for use on currently open conjectures, which answers RQ2 in the affirmative. Four of the open conjectures are also able to be strongly refuted by the generation of a family of counterexamples. This is done by generalizing the counterexamples obtained by AMCS during the experiments, thus producing Theorems \ref{thm_counterex1}--\ref{thm_counterex4}. This indicates that the proposed algorithm can also be utilized to motivate the development of new mathematical theorems. Potential directions for future studies include improving AMCS's performance via, for example, building on ideas and methods from deep learning. Additionally, the conjecture verification task appears to be suitable for training and testing the reasoning capabilities of various deep learning AIs.

\bibliographystyle{apacite}
\bibliography{main}
\end{document}